\newtheorem{theorem}[subsection]{Theorem}
\newtheorem{lemma}[subsection]{Lemma}
\newtheorem{proposition}[subsection]{Proposition}
\newtheorem{corollary}[subsection]{Corollary}
\theoremstyle{definition}
\newtheorem{remark}[subsection]{Remark}
\newtheorem{definition}[subsection]{Definition}
\newtheorem{example}[subsection]{Example}
\def\Lie{\operatorname{\textbf{\textsf{Lie}}}}
\def\Lb{\operatorname{\textbf{\textsf{Lb}}}}
\def\id{\operatorname{id}}
\def\de{\delta}
\def\pa{\partial}
\DeclareMathOperator{\Ker}{\mathsf {Ker}}
\DeclareMathOperator{\ab}{\rm{ab}}
\def\lim{\operatorname{lim}}
\newcommand{\mm}{\mathfrak{m}}
\newcommand{\nn}{\mathfrak{n}}
\newcommand{\Gg}{\mathfrak{g}}
\newcommand{\Aa}{\mathfrak{a}}
\begin{document}

\title[On capability of Leibniz algebras]{On capability of Leibniz algebras}

\author[E. Khmaladze]{Emzar Khmaladze$^1$}
\address{$^1$A. Razmadze Mathematical Institute of Tbilisi State University,
	Tamarashvili St. 6, 0177 Tbilisi, Georgia \& The University of Georgia, Kostava St. 77a, 0171 Tbilisi, Georgia  }
\email{e.khmal@gmail.com}
\author[R. Kurdiani]{Revaz Kurdiani$^2$}
\address{$^2$I. Javakhishvili Tbilisi State University,  University St. 13, TSU Building XI, 0186 Tbilisi, Georgia}
\email{revaz.kurdiani@tsu.ge}
\author[M. Ladra]{Manuel Ladra$^3$}
\address{$^3$Department of Mathematics, Universidade de Santiago de Compostela\\15782 Santiago de Compostela, Spain}
\email{manuel.ladra@usc.es}

\begin{abstract}
We study the capability property of Leibniz algebras via the non-abelian exterior product.
\end{abstract}

\subjclass[2010]{18G10, 18G50.}

\keywords{Leibniz algebra, capable Leibniz algebra, non-abelian tensor and exterior products, tensor and exterior centers.}

\maketitle

\section{Introduction}\label{S:In}

Leibniz algebra is a sort of non-commutative generalization of the Lie algebra structure, with non skew-symmetric bracket, where the Jacobi identity is replaced by the so called Leibniz identity. Leibniz algebras where firstly considered by Bloh in \cite{Blo} and later rediscovered by Loday in \cite{Lo}. This later was inspired by construction of a new (co)homology theory for Lie algebras, the so called Leibniz (co)homology (see \cite{Lo, LoPi}). Namely, Loday noticed that in the Lie homology complex the only property of the bracket needed was the Leibniz identity. Later, Leibniz algebra found other applications in different areas of mathematics and yet it is gaining increasing importance.

During the last 20 years lots of papers appeared investigating extensions of results from Lie to Leibniz algebras. Often in these investigations many non-obvious algebraic identities need to be verified and they are sufficiently non-trivial and interesting.

As an example of these generalizations, the non-abelian exterior product of Leibniz algebra was recently developed in \cite{DoGaKh} with various applications in Leibniz homology. This non-abelian exterior product is the quotient of the  non-abelian tensor product of Leibniz algebras developed by Gnedbaye \cite{Gn99} and at the same time, it is the non-commutative version of the non-abelian exterior product of Lie algebras introduced by Ellis \cite{Ell91}.
%as the Lie algebra version of the non-abelian exterior product of groups invented by Brown and Loday \cite{BrLo}.

In this paper we choose to study capability of Leibniz algebras via the non-abelian exterior product of Leibniz algebras. Our decision was inspired by the known result of Ellis in \cite{Ell95, Ell98} saying that a group is capable if and only if its exterior center is trivial and by the similar investigations  for Lie algebras  realized in \cite{Ni1, Ni2}.

The paper is organized as follows. In Section \ref{S:Pre} we recall some basic definitions on Leibniz algebras and give necessary results for the development of the paper. In Section \ref{S:Ten} we present the definitions of the non-abelian tensor and exterior products slightly differently from the original ones in \cite{Gn99, DoGaKh} and introduce the notions of tensor and exterior centers of a Leibniz algebra. Main results are presented in Section \ref{S:main}. Here we show that a Leibniz algebra is capable if and only if its exterior center is trivial (Theorem \ref{T:main1}). We study relation between tensor and exterior centers and conclude that they coincide for any Leibniz algebra (Corollary \ref{C:main}). Finally, in Section \ref{S:lie} we establish capability condition for perfect Leibniz and Lie algebras (Proposition \ref{P:5.1}), we study tensor and exterior centers of products in Lie and Leibniz algebras (Propositions \ref{P:5.4} and \ref{P:5.5}) and show that there are Lie algebras capable in the category of Leibniz algebras, but not capable in the category of Lie algebras (Corollary \ref{C:last}).

\section{Leibniz algebras}\label{S:Pre}

Throughout the paper $\mathbb{K}$ is a field, unless otherwise stated. All vector spaces and algebras are considered over $\mathbb{K}$, all linear maps are $\mathbb{K}$-linear maps and $\otimes$ stands for $\otimes_{\mathbb{K}}$.

\begin{definition}[\cite{Lo}]
	A Leibniz algebra is a vector space $\mathfrak{g}$  equipped with a linear
	map (Leibniz bracket)
	\[
	[\;,\; ]\colon \mathfrak{g}\otimes \mathfrak{g} \to \mathfrak{g}
	\]
	satisfying the Leibniz identity
	\[
	[x,[y,z]] = [[x,y],z] - [[x,z],y],
	\]
	for all $x,y,z \in \mathfrak{g}$.
\end{definition}

A homomorphism of Leibniz algebras is a linear map preserving the bracket. The respective category of
Leibniz algebras will be denoted by $\Lb$.

The original reason to introduce Leibniz algebras was a new variant of Lie homology, called non-commutative Lie homology or Leibniz homology, developed in \cite{Lo0,LoPi} and denoted by $HL_*$.

Let us note that any Lie algebra is a Leibniz algebra and conversely, any Leibniz algebra with the antisymmetric Leibniz bracket is a Lie algebra. This is why Leibniz algebras are called non-commutative generalization of Lie algebras. Thus, there is a full embedding functor $\Lie \hookrightarrow \Lb$, where $\Lie$ denotes the category of Lie algebras. This embedding has a left adjoint $\mathfrak{{Lie}}\colon \Lb \to \Lie$, called the Liezation functor (see for example \cite{KuPi, CaKh}).

A subspace $\mathfrak{a}$ of a Leibniz algebra $\mathfrak{g}$  is called (two-sided) \emph{ideal} of $\mathfrak{g}$ if $[a,x],\;  [x,a]\in \mathfrak{a}$ for all $a\in \mathfrak{a}$ and $x\in \mathfrak{g}$. In this case the quotient space $\mathfrak{g}/\mathfrak{a}$ naturally inherits a Leibniz algebra structure.

An example of ideal of a Leibniz algebra $\mathfrak{g}$ is the \emph{commutator} of $\mathfrak{g}$, denoted by $[\mathfrak{g},\mathfrak{g}]$, which is the subspace of $\mathfrak{g}$  spanned by elements of the form $[x,y]$, $x,y\in \mathfrak{g}$. The quotient $\mathfrak{g}/[\mathfrak{g}, \mathfrak{g}]$ is denoted by $\mathfrak{g}^{\ab}$ and is called \emph{abelianization} of $\mathfrak{g}$. One more example of an ideal is the \emph{center} $Z(\mathfrak{g})= \{c \in {\mathfrak{g}} \mid [x,c] = 0=[c,x],\ \text{for all} \  x \in {\mathfrak{g}}\}$ of $\mathfrak{g}$. Note that both  $\mathfrak{g}^{\ab}$ and $Z(\mathfrak{g})$ are \emph{abelian Leibniz algebras}, that is, Leibniz algebras with the trivial Leibniz bracket.

\begin{definition} An epimorphism of Leibniz algebras $\mathfrak{f} \overset{\pi}{\twoheadrightarrow} \mathfrak{g}$ is called
\begin{enumerate}
\item[a)]\emph{a central extension} of Leibniz algebras, if $\Ker{(\pi)}\subseteq Z(\mathfrak{f})$;
\item[b)] \emph{a free presentation} of $\mathfrak{g}$, if $\mathfrak{f}$ is a free Leibniz algebra over a set.
\end{enumerate}
\end{definition}

The following lemma is straightforward.
\begin{lemma}\label{L:lamma1}
An epimorphism $\mathfrak{f} \overset{\pi}{\twoheadrightarrow} \mathfrak{g}$ of Leibniz algebras naturally induces a central extension of Leibniz algebras $\mathfrak{f}/[\Ker{(\pi)},\mathfrak{f}]\twoheadrightarrow \mathfrak{g}$.
\end{lemma}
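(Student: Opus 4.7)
My plan is to produce the central extension in the most direct way possible. Set $K=\Ker{(\pi)}$ and let $I=[K,\mathfrak{f}]$ denote the subspace of $\mathfrak{f}$ spanned by all brackets $[k,x]$ and $[x,k]$ with $k\in K$ and $x\in \mathfrak{f}$; this is the natural symmetric reading of the notation, consistent with the definition of $[\mathfrak{g},\mathfrak{g}]$ recalled in Section \ref{S:Pre}, where both slots range over the same space. The goal is then to check three things in order: that $I$ is a two-sided ideal of $\mathfrak{f}$, that $I\subseteq K$, and that $K/I$ lies in the center of $\mathfrak{f}/I$.

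The bulk of the work is the first point. I would handle it by a direct application of the Leibniz identity: for $k\in K$ and $x,y\in \mathfrak{f}$, the rewriting
\[
[[k,x],y]=[k,[x,y]]+[[k,y],x]
\]
exhibits the left-hand side as a sum of two elements of $I$, using the fact that $K$ is already an ideal of $\mathfrak{f}$ to conclude $[k,y]\in K$. The three companion cases $[y,[k,x]]$, $[[x,k],y]$, $[y,[x,k]]$ reduce by exactly the same trick, so $[I,\mathfrak{f}]+[\mathfrak{f},I]\subseteq I$. Once this is settled, $I\subseteq K$ is automatic (since $K$ is an ideal), so $\pi$ descends to a surjection $\bar\pi\colon \mathfrak{f}/I\twoheadrightarrow \mathfrak{g}$ with $\Ker{(\bar\pi)}=K/I$. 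Centrality of $K/I$ in $\mathfrak{f}/I$ is then built into the definition of $I$: for every $k\in K$ and $x\in \mathfrak{f}$ both $[k,x]$ and $[x,k]$ lie in $I$, so their classes vanish in $\mathfrak{f}/I$, whence $K/I\subseteq Z(\mathfrak{f}/I)$.

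I do not expect any real obstacle, which is consistent with the author declaring the lemma straightforward. The only genuinely computational step is the Leibniz-identity rearrangement above, and the symmetric reading of $[K,\mathfrak{f}]$ is essential here: without the $[x,k]$ generators, $K/I$ would only be annihilated on one side in $\mathfrak{f}/I$ and would fail to sit in $Z(\mathfrak{f}/I)$. Naturality of the construction is evident, since $I$ is defined purely in terms of $\pi$.
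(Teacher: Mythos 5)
Your proposal is correct, and it supplies exactly the routine verification the paper omits when it declares the lemma straightforward: the symmetric reading of $[\Ker(\pi),\mathfrak{f}]$, the Leibniz-identity rearrangement $[[k,x],y]=[k,[x,y]]+[[k,y],x]$ (and its three companions) to show this subspace is a two-sided ideal, and the observation that centrality of $\Ker(\pi)/[\Ker(\pi),\mathfrak{f}]$ is then built into the definition. The one computational step checks out, so nothing is missing.
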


In particular, if $0\longrightarrow \mathfrak{r}\longrightarrow \mathfrak{f} \overset{\pa}{\longrightarrow} \mathfrak{g}\longrightarrow 0$ is a free presentation of $\mathfrak{g}$, then $\mathfrak{f}/[\mathfrak{r},\mathfrak{f}]\overset{\pa}{\twoheadrightarrow} \mathfrak{g}$ will be called \emph{the central extension induced by this free presentation}.

\

\begin{definition}
We say that a Leibniz algebra $\mathfrak{g}$ is capable if there exists a Leibniz algebra $\mathfrak{q}$ and an isomorphism of Leibniz algebras
\[
\mathfrak{g}\cong\mathfrak{q}/Z(\mathfrak{q}).
\]
\end{definition}

Let us note that capability is defined in the same way in groups (resp. in Lie algebras). A group $G$ (resp. a Lie algebra $L$) is defined to be capable if there exist a group (resp. a Lie algebra) $Q$ such that the quotient of $Q$ by its center is isomorphic to $G$ (resp. $L$). It is well-known that the notion of capability in groups (resp. in Lie algebras) is related with inner automorphisms (resp. inner derivations). Namely,  a group (resp. a Lie algebra) is capable if and only if it is isomorphic to the group of the inner automorphisms of a group (resp. to the Lie algebra of inner derivations of a Lie algebra). To examine the similar fact for Leibniz algebras, we recall some notions from \cite{Lo} (see also \cite{CaKhLa}).

Let $\mathfrak{q}$ be a Leibniz algebra. A derivation $d$ (resp. an anti-derivation $\de$) of $\mathfrak{q}$ is a linear self-map $\mathfrak{q}\to \mathfrak{q}$
such that $d[x,y]=[d(x),y]+[x,d(y)]$ (resp. $\de[x,y]=[\de(x),y]-[\de(y),x]$) for all $x,y\in \mathfrak{q}$. For instance, given any $x\in \mathfrak{q}$, the map $d_{x}:\mathfrak{q}\to \mathfrak{q}$ given by $d_{x}(y)=[y,x]$, is a derivation and the map $\de_{x}:\mathfrak{q}\to \mathfrak{q}$ given by $\de_{x}(y)=-[x,y]$, is an anti-derivation.

The pair $(d,\de)$, where $d$ is a derivation and $\de$ is an anti-derivation of $\mathfrak{q}$ such that
\[
[x,d(y)]=[x, \de(y)] \quad \text{and} \quad \de[x,y]=-\de[y,x],
\]
is called a biderivation of $\mathfrak{q}$. For example, it is easy to check that for any $x\in \mathfrak{q}$ the pair $(d_{x},\de_{x})$ is  a biderivation, called the inner biderivation.
Denote by $\textsf{BiDer}(\mathfrak{q})$ the set of all biderivations of $\mathfrak{q}$. There is a Leibniz algebra structure on $\textsf{BiDer}(\mathfrak{q})$ (see \cite{Lo}) and it is called the Leibniz algebra of biderivations of $\mathfrak{q}$. The set of all inner biderivations, denoted by $\textsf{InnBiDer}(\mathfrak{q})$ is an ideal of $\textsf{BiDer}(\mathfrak{q})$, and it is called the Leibniz algebra of inner biderivations.

Using the same notations as above, it is easy to see that there is a homomorphism of Leibniz algebras $\mathfrak{q}\to \textsf{BiDer}(\mathfrak{q})$, $x\mapsto (d_{x},\de_{x})$, the image of which is $\textsf{InnBiDer}(\mathfrak{q})$ and the kernel is exactly the center $Z(\mathfrak{q})$. This homomorphism amounts  precisely that a Leibniz algebra $\mathfrak{q}$ is capable if and only if it occurs as the inner biderivations of some Leibniz algebra, that is, there is a Leibniz algebra $\mathfrak{q}$ such that
$\mathfrak{g}\cong \textsf{InnBiDer}(\mathfrak{q})$.

\section{Non-abelian tensor and exterior products of Leibniz algebras}\label{S:Ten}

\subsection{Leibniz actions and crossed modules.}
\begin{definition}
Let $\mm$ and $\nn$ be Leibniz algebras. A \emph{Leibniz action} of $\mm$ on
$\nn$ is a couple of bilinear maps $\mm \times \nn \to \nn$, $(m, n)\mapsto \;^mn$, and
$\nn \times \mm \to \nn$, $(n, m)\mapsto n^m$, satisfying the following axioms:
\begin{align*}
^{[m, m']}n&=\; ^m(^{m'}n)+(^mn)^{m'},& ^m[n, n']&=[^mn, n']-[^mn', n],\\
n^{[m, m']}&=(n^m)^{m'}-(n^{m'})^{m}, & [n, n']^m&=[n^m, n']+[n, n'^m],\\
^m(^{m'}n)&=-^m(n^{m'}), & [n, \;^mn']&=-[n, n'^{m}],
\end{align*}
for each $m, m'\in \mm$, $n, n'\in \nn$. For example, if $\mm$  is a subalgebra of a Leibniz algebra $\mathfrak{g}$  (maybe
  $\mathfrak{g}= \mathfrak{m}$) and $\nn$ is an ideal of $\mathfrak{g}$, then Leibniz bracket in $\mathfrak{g}$ yields a Leibniz action of $\mm$ on $\nn$.
\end{definition}

%\begin{definition}[\cite{Gn99}]
%	Let $\mm$ and $\nn$ be two Leibniz algebras acting on each other. We say these actions are \emph{compatible} if
%	\begin{align*}
%	^{(^mn)}m' &= [m^n, m'], & ^{(^nm)}n' &= [n^m, n'], \\
%	^{(n^m)}m' &= [^nm, m'], & ^{(m^n)}n' &= [^mn, n'], \\
%	m^{(^{m'}n)} &= [m, m'^n] & n^{(^{n'}m)} &= [n, n'^{m}], \\
%	m^{(n^{m'})} &=[m, \;^nm'] & n^{(m^{n'})} &= [n, \;^mn'],
%	\end{align*}
%	for $m, m' \in \mm$ and $n, n' \in \nn$.
%\end{definition}

\begin{definition}
A \emph{Leibniz crossed module} $(\mm, \Gg, \eta)$ is a homomorphism of Leibniz algebras $\eta \colon \mathfrak{m} \to \mathfrak{g}$ together with an action of $\mathfrak{g}$ on $\mathfrak{m}$ such that
\begin{align*}
	& \eta(^xm)=[g,\eta(m)],   \qquad \eta(m^x)=[\eta(m),x],\\
	& ^{\eta(m_1)}m_2=[m_1,m_2]=m_1^{\eta (m_2)},
\end{align*}
where $x \in \Gg$ and $m, m_1, m_2 \in \mm$.
\end{definition}

\begin{example}\label{E:CM}
	Let $\mathfrak{a}$ be an ideal of a Leibniz algebra $\Gg$. Then the inclusion $i \colon \Aa \to \Gg$ is a crossed module where the action of $\Gg$ on $\Aa$ is given by the bracket in $\Gg$. In particular, a Leibniz algebra can be seen as a crossed module by $1_{\Gg} \colon \Gg \to \Gg$.
\end{example}

\subsection{Non-abelian tensor product}\label{S:tensor} The \emph{non-abelian tensor product}, $\mathfrak{m}\star \mathfrak{n}$, of two Leibniz algebras $\mathfrak{m}$ and $\mathfrak{n}$ with mutual action on each other, is introduced in \cite{Gn99}. For further convenience, we present the definition of $\mathfrak{m}\star \mathfrak{n}$ in a slightly different form, where the generators $m*n$ and $n*m$ from the original definition \cite[Definition-Theorem 4.1]{Gn99} are denoted by $m*_1n$ and $m*_2n$, respectively.

\begin{definition}\label{D:tensor}
Let $\mathfrak{m}$ and $\mathfrak{n}$ be Leibniz algebras acting on one another. The non-abelian tensor product of $\mathfrak{m}$ and $\mathfrak{n}$ is
the Leibniz algebra $\mathfrak{m}\star \mathfrak{n}$ generated by the symbols $m *_1n$ and $m*_2n$, for all $m\in \mathfrak{m}$ and $n\in \mathfrak{n}$, subject to the
following relations:
\begin{align*}
(1) \quad&k(m*_in)=km*_in=m*_ikn,  \\
(2\textrm{a}) \quad & (m+m')*_in=m*_in+m'*_in,   \\
(2\textrm{b}) \quad & m*_i(n+n')=m*_i n+m*_in', \\
(3\textrm{a})\quad&m*_1[n, n']=m^n*_1n'-m^{n'}*_1n, \\(3\textrm{b})\quad&[m,m']*_2 n=m'*_2n^m-m*_2n^{m'},\\
(3\textrm{c})\quad&[m, m']*_1n=m' *_2 {}^mn*-m*_1n^{m'}, \\ (3\textrm{d})\quad& m *_2[n, n']={}^nm*_1n'-m^{n'}*_2 n,\\
(4\textrm{a})\quad&m*_1 {}^{m'}n=-m*_1n^{m'}, \\ (4\textrm{b})\quad& {}^{n'}m*_2 n=-m {}^{n'}*_2 n,\\
(5\textrm{a})\quad& m^n*_1{}^{m'}n'=[m*_1n, m'*_1n']=m'^{\; n'}*_2{}^mn, \\ (5\textrm{b})\quad &^nm*_1n'^{\;m'}=[m*_2n, m'*_2n']={}^{n'}m'*_2n^m,\\
(5\textrm{c})\quad& m^n*_1 n'^{\;m'}=[m*_1n, m'*_2n']={}^{n'}m'*_2{}^mn, \\ (5\textrm{d})\quad&^nm*_1{}^{m'}n'=[m*_2n, m'*_1n']=m'^{\;n'}*_2 n^m,
\end{align*}
for $i=1, 2$ and $k\in \mathbb{K}$, $m, m'\in \mathfrak{m}$, $n, n'\in \mathfrak{n}$.
\end{definition}

There are induced homomorphisms of Leibniz algebras $\tau_{\mm} \colon \mm \star \nn \to \mm$ and $\tau_{\nn} \colon \mm \star \nn \to \nn$, where $\tau_{\mm}(m *_1 n) = m^n$, $\tau_{\mm}(m *_2 n) = {}^nm$ and $\tau_{\nn}(m *_1 n) = {}^mn$, $\tau_{\nn}(m *_2 n) = n^m$.

\subsection{Non-abelian exterior product}\label{exterior}
Let $\eta \colon \mm \to \Gg$ and $\mu \colon \nn \to \Gg$ be two Leibniz crossed modules. There are induced actions of $\mm$ and $\nn$ on each other via the action of $\Gg$. Therefore, we can consider the non-abelian tensor product $\mm \star \nn$. We define $\mm \square \nn$ as the vector subspace of $\mm \star \nn$ generated by the elements $m *_1 n' - m' *_2 n$ such that $\eta(m) = \mu(n)$ and $\eta(m') = \mu(n')$. It is shown in \cite[Proposition 1]{DoGaKh} that $\mm \square \nn$ is an ideal of $\mm \star \nn$.
\begin{definition}\cite{DoGaKh}
	Let $\eta \colon \mm \to \Gg$ and $\mu \colon \nn \to \Gg$ be two Leibniz crossed modules. The \emph{non-abelian exterior product} $\mm \curlywedge \nn$ of $\mm$ and $\nn$ is defined to be
	\[
	\mm \curlywedge \nn = \dfrac{\mm \star \nn} {\mm \square \nn}.
	\]
	The cosets of $m *_1 n$ and $m *_2 n$ will be denoted by $m \curlywedge_1 n$ and $m \curlywedge_2 n$, respectively.
\end{definition}

There is an epimorphism of Leibniz algebras $\pi\colon \mathfrak{m}\star \mathfrak{n} \to \mathfrak{m}\curlywedge \mathfrak{n}$ sending $m *_1 n$ and $m *_2 n$ to $m \curlywedge_1 n$ and $m \curlywedge_2 n$, respectively.

Let $\mathfrak{a}$ and $\mathfrak{b}$ be two ideals of a Leibniz algebra $\mathfrak{g}$. Then they act on each other via the Leibniz bracket in $\mathfrak{g}$. Let us note that, for each $a\in \mathfrak{a}$ and $b\in \mathfrak{b}$, the non-abelian tensor product $\mathfrak{a}\star \mathfrak{b}$ has two types of generators: $a *_1 b$ and $a *_2 b$.  Further, the non-abelian exterior product $\mathfrak{a}\curlywedge \mathfrak{b}$ (defined via the inclusion crossed modules $\mathfrak{a}\hookrightarrow \mathfrak{g}$ and $\mathfrak{b}\hookrightarrow \mathfrak{g}$) is just the quotient of $\mathfrak{a}\star \mathfrak{b}$ by the elements of the form $c *_1 c' - c' *_2 c$, where $c, c' \in \mathfrak{a}\cap \mathfrak{b}$. In particular, $\mathfrak{g}\curlywedge \mathfrak{g}$ is the quotient of $\mathfrak{g}\star \mathfrak{g}$ by the relation $x*_1y=y*_2x$ for  $x, y\in \mathfrak{g}$.

The following lemma is straightforward
\begin{lemma}\label{L:lemma ex ten}
If $\mathfrak{g}$ is an abelian Leibniz algebra,
then there is an isomorphism of vector spaces
\[
\mathfrak{g}\curlywedge \mathfrak{g}\cong \mathfrak{g}\otimes \mathfrak{g},
\]
where $\mathfrak{g}\otimes \mathfrak{g}$ denotes the tensor product of vector spaces.
\end{lemma}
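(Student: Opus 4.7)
The plan is to exploit the abelian hypothesis to kill most of the defining relations of the non-abelian tensor product, and then use the exterior relation to identify the two families of generators into a single copy of $\mathfrak{g}\otimes\mathfrak{g}$.

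First, I would note that when $\mathfrak{g}$ is abelian the Leibniz action of $\mathfrak{g}$ on itself by the bracket (in the sense of the standard example in Section \ref{S:Ten}) is identically zero, since ${}^{m}n = [m,n] = 0$ and $n^{m} = [n,m] = 0$ for all $m,n\in\mathfrak{g}$. Each of the relations (3a)--(3d), (4a)--(4b), and (5a)--(5d) in Definition \ref{D:tensor} has both sides consisting of brackets and/or actions on $\mathfrak{g}$, and hence collapses to the tautology $0 = 0$. So the only effective constraints on $\mathfrak{g}\star\mathfrak{g}$ are the bilinearity relations (1), (2a), (2b), imposed separately on the two families of generators $m*_1 n$ and $m*_2 n$. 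By the universal property of the tensor product of vector spaces, this identifies
\[
\mathfrak{g}\star\mathfrak{g} \;\cong\; (\mathfrak{g}\otimes\mathfrak{g}) \oplus (\mathfrak{g}\otimes\mathfrak{g})
\]
as a vector space, the two summands spanned by the $*_1$- and $*_2$-generators respectively.

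Next, I would analyze the exterior quotient. With $\mathfrak{a}=\mathfrak{b}=\mathfrak{g}$ we have $\mathfrak{a}\cap\mathfrak{b}=\mathfrak{g}$, so by the description given just before the lemma, $\mathfrak{g}\curlywedge\mathfrak{g}$ is obtained by further imposing $x *_1 y = y *_2 x$ for all $x,y\in\mathfrak{g}$. Inside the direct sum decomposition above, this identifies the element $m\otimes n$ of the first summand with $n\otimes m$ of the second, collapsing the two copies to a single $\mathfrak{g}\otimes\mathfrak{g}$. Explicitly, one defines $\phi\colon \mathfrak{g}\otimes\mathfrak{g}\to\mathfrak{g}\curlywedge\mathfrak{g}$ by $m\otimes n\mapsto m\curlywedge_1 n$, which is well-defined by the universal property of $\otimes$, and an inverse $\psi$ sending $m\curlywedge_1 n\mapsto m\otimes n$ and $m\curlywedge_2 n\mapsto n\otimes m$; the well-definedness of $\psi$ follows from the reductions above, since the remaining bilinearity relations are respected and $m\curlywedge_1 n = n\curlywedge_2 m$ is matched by the symmetric choice on the $*_2$-generators.

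The only real work is the bookkeeping check that each of the nine action-bracket relations in Definition \ref{D:tensor} really does reduce to $0=0$ in the abelian case; this is entirely mechanical once the triviality of the actions is in hand, after which the isomorphism is immediate from the universal property of $\otimes$.
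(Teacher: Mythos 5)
Your strategy is the natural one (the paper omits this proof as ``straightforward''), your conclusion is correct, and the explicit maps $\phi$ and $\psi$ at the end are fine. But one step of your justification is wrong as stated, and it happens to be the step carrying the only real content. You claim that relations (5a)--(5d) of Definition \ref{D:tensor} ``collapse to the tautology $0=0$'' in the abelian case. Their outer terms do vanish, since they are generators indexed by actions and brackets computed in $\mathfrak{g}$, all of which are zero. The middle terms, however, such as $[m*_1 n,\, m'*_1 n']$, are brackets computed in the Leibniz algebra $\mathfrak{g}\star\mathfrak{g}$, not in $\mathfrak{g}$, and they are not zero for free: the relation is precisely what forces them to be zero.

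This matters because $\mathfrak{g}\star\mathfrak{g}$ is by definition the Leibniz algebra generated by the symbols $m*_1 n$, $m*_2 n$ subject to the listed relations, i.e.\ a quotient of the \emph{free Leibniz algebra} on those symbols. If the only effective relations were the bilinearity relations (1), (2a), (2b), the result would be much larger than the linear span of the generators, because iterated brackets of generators would survive as new independent elements, and your appeal to the universal property of the vector-space tensor product would not identify $\mathfrak{g}\star\mathfrak{g}$ with $(\mathfrak{g}\otimes\mathfrak{g})\oplus(\mathfrak{g}\otimes\mathfrak{g})$. What rescues the argument is exactly (5a)--(5d): in the abelian case they read $[g,g']=0$ for every pair of generators, so $\mathfrak{g}\star\mathfrak{g}$ is an abelian Leibniz algebra and hence coincides, as a vector space, with the span of the generators modulo bilinearity; this gives the direct sum of two copies of $\mathfrak{g}\otimes\mathfrak{g}$ (consistently with \cite[Proposition 4.2]{Gn99}, which the paper invokes later). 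With that correction the remainder of your proof goes through: for the identity crossed module the ideal $\mathfrak{g}\square\mathfrak{g}$ reduces to the relation $x*_1 y = y*_2 x$, which glues the two summands along the flip, and your $\phi$ and $\psi$ are mutually inverse on generators.
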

The following proposition is immediate.
\begin{proposition}\cite{DoGaKh}
	Let $\mathfrak{a}$ and $\mathfrak{b}$ be two ideals of a Leibniz algebra. There is a homomorphism of Leibniz algebras
	\[
	\theta_{\mathfrak{a}, \mathfrak{b}} \colon 	\mathfrak{a}\star \mathfrak{b} \to \mathfrak{a}\cap \mathfrak{b} \quad (\text{resp.} \ \theta_{\mathfrak{a}, \mathfrak{b}} \colon 	\mathfrak{a}\curlywedge \mathfrak{b} \to \mathfrak{a}\cap \mathfrak{b})
	\]
	defined on generators by
	$\theta_{\mathfrak{a}, \mathfrak{b}}(a\star_1 b)= [a, b]$ and $\theta_{\mathfrak{a}, \mathfrak{b}}(a\star_2 b)=[b, a]$ (resp. $\theta_{\mathfrak{a}, \mathfrak{b}}(a\curlywedge_1 b)= [a, b]$ and $\theta_{\mathfrak{a}, \mathfrak{b}}(a\curlywedge_2 b)=[b, a]$),
	for all $a\in \mathfrak{a}$ and $b\in \mathfrak{b}$.
\end{proposition}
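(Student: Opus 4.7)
The plan is to define $\theta_{\mathfrak{a},\mathfrak{b}}$ on the generating set $\{a *_1 b,\, a *_2 b\}$ of $\mathfrak{a} \star \mathfrak{b}$ by the prescribed formulas and then verify that each of the defining relations in Definition \ref{D:tensor} is sent to zero in $\mathfrak{a} \cap \mathfrak{b}$. Since $\mathfrak{a}$ and $\mathfrak{b}$ are ideals of $\mathfrak{g}$, both $[a,b]$ and $[b,a]$ lie in $\mathfrak{a} \cap \mathfrak{b}$, so the codomain is meaningful. Concretely, I would first extend the assignment to a Leibniz algebra homomorphism out of the free Leibniz algebra on this set, and then check that each relation of $\mathfrak{a} \star \mathfrak{b}$ lies in the kernel. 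Once the map is known to be a Leibniz homomorphism, relations (5a)--(5d), which encode the Leibniz bracket on $\mathfrak{a} \star \mathfrak{b}$, reduce under $\theta_{\mathfrak{a},\mathfrak{b}}$ to tautologies such as $[[a,b],[a',b']] = [[a,b],[a',b']]$.

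Relations (1)--(2b) follow from bilinearity of the Leibniz bracket, and relations (3a) and (3b) are direct rearrangements of the Leibniz identity $[x,[y,z]] = [[x,y],z] - [[x,z],y]$. The remaining relations (3c), (3d), (4a) and (4b) all reduce, after applying $\theta_{\mathfrak{a},\mathfrak{b}}$, to the auxiliary identity
\[
[z,\,[x,y]+[y,x]] = 0 \qquad \text{for all } x,y,z \in \mathfrak{g},
\]
which holds in every Leibniz algebra: expanding $[z,[x,y]]$ and $[z,[y,x]]$ via the Leibniz identity produces four summands that cancel in pairs. This short lemma is the only non-mechanical input of the verification.

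For the exterior version, it suffices to observe that $\mathfrak{a} \square \mathfrak{b}$ is generated by elements $c *_1 c' - c' *_2 c$ with $c, c' \in \mathfrak{a} \cap \mathfrak{b}$, and
\[
\theta_{\mathfrak{a},\mathfrak{b}}(c *_1 c' - c' *_2 c) = [c,c'] - [c,c'] = 0,
\]
so $\theta_{\mathfrak{a},\mathfrak{b}}$ descends to the quotient $\mathfrak{a} \curlywedge \mathfrak{b}$. The only real obstacle is the bookkeeping across the ten defining relations; no deeper input than the auxiliary identity displayed above is required.
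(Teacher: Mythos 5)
Your verification is correct and is exactly the routine check that the paper leaves implicit (it states the proposition as immediate, citing \cite{DoGaKh}): bilinearity handles (1)--(2b), the Leibniz identity handles (3a)--(3d) and, via your auxiliary identity $[z,[x,y]+[y,x]]=0$, also (4a)--(4b); the homomorphism property out of the free Leibniz algebra makes (5a)--(5d) tautological, and $\theta_{\mathfrak{a},\mathfrak{b}}$ kills the generators $c*_1c'-c'*_2c$ of $\mathfrak{a}\square\mathfrak{b}$, so it descends to $\mathfrak{a}\curlywedge\mathfrak{b}$. One small correction: (3c) and (3d) do not reduce to the auxiliary identity but are direct instances of the Leibniz identity itself (for (3c), the needed equality is $[m,[n,m']]=[[m,n],m']-[[m,m'],n]$ verbatim); only (4a) and (4b) require the symmetrized form $[z,[x,y]+[y,x]]=0$.
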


\begin{proposition}\cite{DoGaKh}\label{P:perfect}
	Let $\mathfrak{g}$ be a perfect Leibniz algebra, that is, $\mathfrak{g}=[\mathfrak{g},\mathfrak{g}]$. {Then $\mathfrak{g}\star\mathfrak{g}=\mathfrak{g}\curlywedge \mathfrak{g}$ and
	the homomorphism $\theta_{\mathfrak{g}, \mathfrak{g}} \colon \mathfrak{g}\curlywedge \mathfrak{g} \twoheadrightarrow \mathfrak{g}$ is the universal central extension of $\mathfrak{g}$.}
\end{proposition}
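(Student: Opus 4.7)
The plan is to extract both claims from a close reading of the defining relations of $\mathfrak{g}\star\mathfrak{g}$ together with the perfectness hypothesis $\mathfrak{g}=[\mathfrak{g},\mathfrak{g}]$. For the first equality $\mathfrak{g}\star\mathfrak{g}=\mathfrak{g}\curlywedge\mathfrak{g}$, I will show that each generator $x*_1y-y*_2x$ of $\mathfrak{g}\square\mathfrak{g}$ already vanishes in $\mathfrak{g}\star\mathfrak{g}$. Identity (5a) of Definition~\ref{D:tensor}, read with $\mathfrak{g}$ acting on itself by its own bracket (so that $m^n={}^mn=[m,n]$), reduces to $[m,n]*_1[m',n']=[m',n']*_2[m,n]$ in $\mathfrak{g}\star\mathfrak{g}$. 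Writing arbitrary $x,y\in\mathfrak{g}=[\mathfrak{g},\mathfrak{g}]$ as sums of brackets and expanding via the bilinearity relations (1), (2a), (2b) then yields $x*_1y=y*_2x$, so that $\mathfrak{g}\square\mathfrak{g}=0$.

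For the second assertion I will verify separately that $\theta_{\mathfrak{g},\mathfrak{g}}$ is surjective, that its kernel is central, and that $\mathfrak{g}\curlywedge\mathfrak{g}$ is perfect. Surjectivity is immediate from perfectness: $x=\sum[a_i,b_i]=\sum\theta_{\mathfrak{g},\mathfrak{g}}(a_i\curlywedge_1 b_i)$. For centrality of the kernel, the key observation is that identities (5a)--(5d), read through the same self-action, collapse on pairs of generators into the single formula
\[
[u,v]=\theta_{\mathfrak{g},\mathfrak{g}}(u)\curlywedge_1\theta_{\mathfrak{g},\mathfrak{g}}(v),
\]
which then extends by bilinearity to all of $\mathfrak{g}\curlywedge\mathfrak{g}$. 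Hence $\theta_{\mathfrak{g},\mathfrak{g}}(u)=0$ forces $[u,v]=[v,u]=0$ for every $v$, and the same formula exhibits every generator $x\curlywedge_1 y$ as a commutator, so $\mathfrak{g}\curlywedge\mathfrak{g}$ is itself perfect.

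Universality will then follow by the standard construction. Given any central extension $\rho\colon\mathfrak{e}\twoheadrightarrow\mathfrak{g}$, I will define a map $\varphi\colon\mathfrak{g}\curlywedge\mathfrak{g}\to\mathfrak{e}$ on generators by $\varphi(x\curlywedge_1 y)=[\tilde x,\tilde y]$ and $\varphi(x\curlywedge_2 y)=[\tilde y,\tilde x]$, for arbitrary lifts $\tilde x,\tilde y\in\mathfrak{e}$ of $x,y$. Centrality of $\ker\rho$ makes $\varphi$ independent of the chosen lifts and, upon expansion, turns each defining relation into a consequence of the Leibniz identity in $\mathfrak{e}$; by construction $\rho\circ\varphi=\theta_{\mathfrak{g},\mathfrak{g}}$. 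Uniqueness is forced by perfectness of $\mathfrak{g}\curlywedge\mathfrak{g}$: any two lifts compatible with $\theta_{\mathfrak{g},\mathfrak{g}}$ differ by a Leibniz homomorphism from a perfect algebra into the abelian ideal $\ker\rho$, which must be zero.

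The main technical hurdle I anticipate is the well-definedness of $\varphi$: each of the relations (1)--(5d) of the non-abelian tensor product, together with the vanishing of $\mathfrak{g}\square\mathfrak{g}$, must be translated into an identity in $\mathfrak{e}$ by choosing lifts, rewriting via the Leibniz identity in $\mathfrak{e}$, and absorbing the remaining ambiguity into the central subspace $\ker\rho$. These checks are routine but numerous, and the parallel bookkeeping required for the two generator types $*_1,*_2$ (respectively $\curlywedge_1,\curlywedge_2$) is where errors are easiest to make.
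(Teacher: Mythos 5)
Your argument is correct, and for the first claim it coincides with the paper's: the authors likewise observe that the identities (5a)--(5d), read through the self-action $m^n={}^mn=[m,n]$, force $x*_1y=y*_2x$ for $x,y\in[\mathfrak{g},\mathfrak{g}]=\mathfrak{g}$, so $\mathfrak{g}\square\mathfrak{g}=0$. Where you diverge is the second claim: the paper disposes of it in one line by citing Gnedbaye's Theorem~6.5, which already asserts that $\theta_{\mathfrak{g},\mathfrak{g}}\colon\mathfrak{g}\star\mathfrak{g}\to\mathfrak{g}$ is the universal central extension of a perfect Leibniz algebra, whereas you reconstruct that theorem from scratch. Your reconstruction is sound, and the identity $[u,v]=\theta_{\mathfrak{g},\mathfrak{g}}(u)\curlywedge_1\theta_{\mathfrak{g},\mathfrak{g}}(v)$, valid on generators by (5a)--(5d) and extended by bilinearity, is exactly the right lever: it gives centrality of $\Ker\theta_{\mathfrak{g},\mathfrak{g}}$, perfectness of $\mathfrak{g}\curlywedge\mathfrak{g}$, and (via perfectness) uniqueness of the comparison map all at once. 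What your route buys is self-containment and an explicit description of the lift $\varphi(x\curlywedge_1y)=[\tilde x,\tilde y]$; what it costs is the relation-by-relation verification that $\varphi$ is well defined, which you correctly flag as the real labour and which the citation to \cite{Gn99} is precisely designed to avoid. One small caveat: the ``difference'' $\varphi_1-\varphi_2$ of two lifts is a priori only a linear map into $\Ker\rho$; the clean way to finish uniqueness is to note that $\varphi_1$ and $\varphi_2$ agree on all brackets $[u,v]$ because central perturbations of the arguments do not change a bracket, and then invoke perfectness of $\mathfrak{g}\curlywedge\mathfrak{g}$ --- which is what you intend, so this is a matter of phrasing rather than a gap.
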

\begin{proof}The last four identities of the non-abelian tensor product immediately imply that $\mathfrak{g}\star\mathfrak{g}=\mathfrak{g}\curlywedge \mathfrak{g}$. Hence,  by \cite[Theorem 6.5]{Gn99} the homomorphism $\theta_{\mathfrak{g}, \mathfrak{g}} \colon \mathfrak{g}\curlywedge \mathfrak{g} \twoheadrightarrow \mathfrak{g}$ is the universal central extension of the perfect Leibniz algebra $\mathfrak{g}$.
\end{proof}

\subsection{Tensor and exterior centers}\label{centers} The following notions are useful in the study of capability of Leibniz algebras.

\begin{definition}
Let $\mathfrak{g}$ be a Leibniz algebra.
\begin{enumerate}
\item[a)] The tensor center $Z^{\star}(\mathfrak{g})$ of  $\mathfrak{g}$ is defined to be
\[
Z^{\star}(\mathfrak{g})=\{g\in \mathfrak{g} \mid g \star_1 x = g \star_2 x =0 \ \text{for all} \ x\in \mathfrak{g} \};
\]
\item[b)] The exterior center $Z^{\wedge}(\mathfrak{g})$ of  $\mathfrak{g}$ is defined to be
\[
Z^{\curlywedge}(\mathfrak{g})=\{g\in \mathfrak{g} \mid g \curlywedge_1 x = g \curlywedge_2 x =0 \ \text{for all} \ x\in \mathfrak{g} \}.
\]
\end{enumerate}
\end{definition}

\begin{lemma}
For any Leibniz algebra $\mathfrak{g}$ both $Z^{\star}(\mathfrak{g})$ and $Z^{\curlywedge}(\mathfrak{g})$ are ideals of $\mathfrak{g}$ contained in the center  $Z(\mathfrak{g})$.
\end{lemma}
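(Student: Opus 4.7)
The plan is to handle $Z^{\star}(\mathfrak{g})$ and $Z^{\curlywedge}(\mathfrak{g})$ in parallel, since the arguments are identical. I would break the proof into three short steps: linearity, containment in the center, and the ideal property (the last being automatic from the second).

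First I would check that both sets are linear subspaces of $\mathfrak{g}$. The relations (1), (2a), (2b) of Definition \ref{D:tensor} express bilinearity of the generators $m *_i n$ in each argument, and these descend to $m \curlywedge_i n$ after passing to the quotient $\mathfrak{g}\curlywedge\mathfrak{g}$. Hence if $g, g' \in Z^{\star}(\mathfrak{g})$ and $k \in \mathbb{K}$, then $(g + kg') *_i x = g *_i x + k(g' *_i x) = 0$ for every $x \in \mathfrak{g}$ and $i \in \{1,2\}$, so $g + kg' \in Z^{\star}(\mathfrak{g})$; the same computation handles $Z^{\curlywedge}(\mathfrak{g})$.

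Next I would invoke the commutator homomorphisms $\theta_{\mathfrak{g},\mathfrak{g}}$ from the preceding proposition, which send $g *_1 x \mapsto [g,x]$ and $g *_2 x \mapsto [x,g]$ (and likewise on the exterior side $g \curlywedge_1 x \mapsto [g,x]$, $g \curlywedge_2 x \mapsto [x,g]$). If $g \in Z^{\star}(\mathfrak{g})$, then applying $\theta_{\mathfrak{g},\mathfrak{g}}$ to $g *_1 x = 0 = g *_2 x$ gives $[g,x] = 0 = [x,g]$ for every $x \in \mathfrak{g}$, so $g \in Z(\mathfrak{g})$. The identical computation, applied to the exterior version of $\theta_{\mathfrak{g},\mathfrak{g}}$, yields $Z^{\curlywedge}(\mathfrak{g}) \subseteq Z(\mathfrak{g})$.

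Finally, the ideal property is essentially free from this containment: for $g$ in either subspace and any $y \in \mathfrak{g}$, both $[g,y]$ and $[y,g]$ vanish in $\mathfrak{g}$, and zero trivially belongs to $Z^{\star}(\mathfrak{g})$ and to $Z^{\curlywedge}(\mathfrak{g})$. I do not foresee any real obstacle here: the whole argument rests on the existence of the commutator maps $\theta_{\mathfrak{g},\mathfrak{g}}$ together with the bilinearity that is built into the defining relations of $\mathfrak{g}\star\mathfrak{g}$.
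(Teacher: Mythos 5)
Your proof is correct and follows essentially the same route as the paper: linearity from relations (1), (2a), (2b) and containment in $Z(\mathfrak{g})$ by applying the commutator homomorphism $\theta_{\mathfrak{g},\mathfrak{g}}$ to the vanishing generators. The one (harmless) divergence is the ideal property: the paper attributes it to relations (3a)--(3d), whereas you observe that any subspace contained in the center is automatically a two-sided ideal, which is cleaner and avoids having to manipulate those relations at all.
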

\begin{proof} Thanks to the identities (1), (2a), (2b) in Definition \ref{D:tensor}  $Z^{\star}(\mathfrak{g})$ is a vector subspace of $\mathfrak{g}$ and it is an ideal because of the identities (3a)-(3d). For any $g\in Z^{\star}(\mathfrak{g})$ and $x\in \mathfrak{g} $ we have
\begin{align*}
&[g,x]=\theta_{\mathfrak{g}, \mathfrak{g}}(g\star_1 x)=0, \\
& [x,g]=\theta_{\mathfrak{g}, \mathfrak{g}}(g\star_2 x)=0,
\end{align*}
which means that $Z^{\star}(\mathfrak{g})\subseteq Z(\mathfrak{g})$. Similarly $Z^{\curlywedge}(\mathfrak{g})$ is an ideal of $\mathfrak{g}$ contained in $Z(\mathfrak{g})$.
\end{proof}

It is clear that $Z^{\star}(\mathfrak{g}) \subseteq Z^{\curlywedge}(\mathfrak{g})$. It follows by Proposition \ref{P:perfect} that this inclusion is an equality if $\mathfrak{g}$ is a perfect Leibniz algebra. We will show later that $Z^{\star}(\mathfrak{g}) = Z^{\curlywedge}(\mathfrak{g})$ for any Leibniz algebra $\mathfrak{g}$.

\section{Main results}\label{S:main}

\subsection{Capability condition via exterior center}

\begin{lemma}
Let $\mathfrak{g}$ a Leibniz algebra and $\mathfrak{c} \overset{\pa^*}{\longrightarrow} \mathfrak{g}$ be the central extension induced by a free presentation $0\longrightarrow \mathfrak{r}\longrightarrow \mathfrak{f} \overset{\pa}{\longrightarrow} \mathfrak{g}\longrightarrow 0$ of $\mathfrak{g}$. Then there is a natural isomorphism of Leibniz algebras
\[
[\mathfrak{c}, \mathfrak{c}]\overset{\cong}{\longrightarrow} \mathfrak{g} \curlywedge \mathfrak{g}, \quad [c_1, c_2]\mapsto \pa(c_1)\curlywedge \pa(c_2)
\]
\end{lemma}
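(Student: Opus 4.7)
The plan is to construct mutually inverse homomorphisms
\[
\phi\colon \mathfrak{g}\curlywedge \mathfrak{g} \lra [\mathfrak{c},\mathfrak{c}], \quad g_1\curlywedge_1 g_2 \longmapsto [c_1,c_2], \quad \text{and} \quad \psi\colon [\mathfrak{c},\mathfrak{c}] \lra \mathfrak{g}\curlywedge \mathfrak{g}, \quad [c_1,c_2]\longmapsto \pa^*(c_1)\curlywedge_1 \pa^*(c_2),
\]
where each $c_i\in\mathfrak{c}$ is an arbitrary preimage of $g_i$ under $\pa^*$. As preliminary book-keeping, the kernel $\mathfrak{r}/[\mathfrak{r},\mathfrak{f}]$ of $\pa^*$ lies in $Z(\mathfrak{c})$ by Lemma \ref{L:lamma1}; in particular $[\mathfrak{c},\mathfrak{c}] = [\mathfrak{f},\mathfrak{f}]/[\mathfrak{r},\mathfrak{f}]$ and every bracket $[c_1,c_2]$ depends only on the $\pa^*$-images of its arguments, so any two lifts produce the same commutator.

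For $\phi$, I first extend the rules $g_1 \star_1 g_2 \mapsto [c_1,c_2]$ and $g_1\star_2 g_2 \mapsto [c_2,c_1]$ linearly to $\mathfrak{g}\star \mathfrak{g}$ and verify the defining relations of Definition \ref{D:tensor}: (1) and (2a,b) are bilinearity; (3a)--(3d) are direct translations of the Leibniz identity in $\mathfrak{c}$; (4a,b), such as $m\star_1{}^{m'}n = -m \star_1 n^{m'}$, follow after two consecutive applications of the Leibniz identity so that the offending triple brackets cancel; (5a)--(5d) encode compatibility with the iterated bracket and are automatic once we ask $\phi$ to be a Leibniz algebra homomorphism. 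Then $\phi$ factors through $\mathfrak{g}\curlywedge \mathfrak{g}$ since $\phi(x\star_1 y - y\star_2 x) = [c_x,c_y]-[c_x,c_y]=0$ on the defining subspace, and the resulting map is visibly surjective because $[\mathfrak{c},\mathfrak{c}]$ is spanned by brackets.

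The construction of $\psi$ is the main obstacle. The bilinear pairing $(f_1,f_2)\mapsto \pa(f_1)\curlywedge_1\pa(f_2)$ extends to a linear map $\mathfrak{f}\otimes \mathfrak{f}\to \mathfrak{g}\curlywedge \mathfrak{g}$, and I need to show that it descends through the bracket map $\mathfrak{f}\otimes \mathfrak{f} \twoheadrightarrow [\mathfrak{f},\mathfrak{f}]$ and then through the quotient by $[\mathfrak{r},\mathfrak{f}]$. Because $\mathfrak{f}$ is free, the kernel of its bracket map is, modulo bilinearity, generated by consequences of the Leibniz identity, which under the pairing become precisely relation (3a) in $\mathfrak{g}\curlywedge \mathfrak{g}$ and so vanish; the generators $[r,f]$ and $[f,r]$ of $[\mathfrak{r},\mathfrak{f}]$ in turn map to $\pa(r)\curlywedge_1\pa(f) = 0\curlywedge_1 \pa(f) = 0$ since $\pa(r)=0$. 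This yields the desired map $\psi \colon [\mathfrak{f},\mathfrak{f}]/[\mathfrak{r},\mathfrak{f}]=[\mathfrak{c},\mathfrak{c}]\to \mathfrak{g}\curlywedge \mathfrak{g}$. On generators both $\phi\circ\psi$ and $\psi\circ\phi$ are the identity, so $\phi$ and $\psi$ are mutually inverse, and naturality follows from the naturality of the construction in $\mathfrak{g}$. The most delicate step throughout is justifying that in the free Leibniz algebra $\mathfrak{f}$ the only identities among brackets are the bilinearity- and Leibniz-type ones, because it is exactly on these that the relations (3a)--(3d) of the non-abelian tensor product have been tailored to match.
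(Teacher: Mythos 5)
Your overall architecture --- explicit mutually inverse maps $\phi$ and $\psi$, with $\phi$ obtained from the universal property of the non-abelian tensor product and $\psi$ descended from a pairing on $\mathfrak{f}\otimes\mathfrak{f}$ --- is sound, and the verifications you sketch for $\phi$ do go through: the relations (3a)--(5d) reduce to instances of the Leibniz identity in $\mathfrak{c}$, and centrality of $\Ker\pa^{*}$ makes the choice of lifts immaterial. The genuine gap is in the construction of $\psi$, which you correctly identify as the main obstacle and then resolve by assertion. The claim that ``in the free Leibniz algebra $\mathfrak{f}$ the only identities among brackets are the bilinearity- and Leibniz-type ones'' --- equivalently, that the kernel of the bracket map $\mathfrak{f}\otimes\mathfrak{f}\twoheadrightarrow[\mathfrak{f},\mathfrak{f}]$ is generated by the images of the defining relations of $\mathfrak{f}\curlywedge\mathfrak{f}$ --- is exactly the statement that $\theta_{\mathfrak{f},\mathfrak{f}}\colon\mathfrak{f}\curlywedge\mathfrak{f}\to[\mathfrak{f},\mathfrak{f}]$ is an isomorphism. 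That is a theorem (it amounts to the vanishing of the second Leibniz homology of a free Leibniz algebra), not a formal consequence of freeness, and it carries the entire content of the lemma: without it $\psi$ is not well defined and the injectivity of $\phi$ is unproved. You must either prove this or cite it; it is \cite[Proposition 4]{DoGaKh}.

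For comparison, the paper's proof avoids constructing maps on generators altogether: it combines the right-exact sequence $\mathfrak{r}\curlywedge\mathfrak{f}\to\mathfrak{f}\curlywedge\mathfrak{f}\to\mathfrak{g}\curlywedge\mathfrak{g}\to 0$ of \cite[Lemma 1]{DoGaKh} with the isomorphism $\theta_{\mathfrak{f},\mathfrak{f}}\colon\mathfrak{f}\curlywedge\mathfrak{f}\cong[\mathfrak{f},\mathfrak{f}]$, which carries $i(\mathfrak{r}\curlywedge\mathfrak{f})$ onto $[\mathfrak{r},\mathfrak{f}]$, to obtain
\[
[\mathfrak{c},\mathfrak{c}]\cong[\mathfrak{f},\mathfrak{f}]/[\mathfrak{r},\mathfrak{f}]\cong(\mathfrak{f}\curlywedge\mathfrak{f})/i(\mathfrak{r}\curlywedge\mathfrak{f})\cong\mathfrak{g}\curlywedge\mathfrak{g}
\]
in three lines. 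The same freeness input appears there, but packaged as a citation rather than an unsupported claim; your route additionally requires the relation checks for $\phi$, which the paper's argument does not need. Closing your proof therefore requires importing precisely the two results the paper cites, at which point the explicit-maps presentation is a legitimate, if longer, alternative.
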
\label{L:lemma4.2}
\begin{proof}
As we know $\mathfrak{c}=\mathfrak{f}/[\mathfrak{f}, \mathfrak{r}]$ (see Lemma \ref{L:lamma1}). Then it is easy to see that there is a natural isomorphism $[\mathfrak{c}, \mathfrak{c}]\cong [\mathfrak{f}, \mathfrak{f}]/[\mathfrak{r}, \mathfrak{f}]$. By \cite[Lemma 1]{DoGaKh} we have the exact sequence of Leibniz algebras $\mathfrak{r}\curlywedge\mathfrak{f}\overset{i}{\longrightarrow} \mathfrak{f}\curlywedge\mathfrak{f}\longrightarrow \mathfrak{g}\curlywedge\mathfrak{g}\longrightarrow 0$ and so $(\mathfrak{f}\curlywedge\mathfrak{f})/{i(\mathfrak{r}\curlywedge\mathfrak{f})}\cong \mathfrak{g}\curlywedge\mathfrak{g}$. On the other hand, since  $\mathfrak{f}$ is a free Leibniz algebra, it follows from \cite[Proposition 4]{DoGaKh} that we have the isomorphism  $\theta_{\mathfrak{f}, \mathfrak{f}}: \mathfrak{f}\curlywedge\mathfrak{f}\overset{\cong}{\longrightarrow} [\mathfrak{f}, \mathfrak{f}] $
  and $\theta_{\mathfrak{f}, \mathfrak{f}} i (\mathfrak{r}\curlywedge\mathfrak{f})= [\mathfrak{r}, \mathfrak{f}] $. Then we get
\[
[\mathfrak{c}, \mathfrak{c}]\cong [\mathfrak{f}, \mathfrak{f}]/[\mathfrak{r}, \mathfrak{f}] \cong (\mathfrak{f}\curlywedge\mathfrak{f})/{i(\mathfrak{r}\curlywedge\mathfrak{f})} \cong
\mathfrak{g}\curlywedge\mathfrak{g}.
\]
\end{proof}

\begin{corollary}\label{C:coro}
Let $\mathfrak{c} \overset{\pa^*}{\longrightarrow} \mathfrak{g}$ be the central extension induced by a free presentation of $\mathfrak{g}$. Then
$x\in Z(\mathfrak{c})$ if and only if $\pa^*(x)\in Z^{\curlywedge}(\mathfrak{g})$.

\end{corollary}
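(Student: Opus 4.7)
The plan is to use Lemma \ref{L:lemma4.2} as the workhorse: the isomorphism $\phi \colon [\mathfrak{c},\mathfrak{c}] \overset{\cong}{\to} \mathfrak{g}\curlywedge\mathfrak{g}$, $[c_1,c_2] \mapsto \pa^*(c_1) \curlywedge_1 \pa^*(c_2)$, converts the problem of recognizing centrality in $\mathfrak{c}$ (which amounts to vanishing of certain brackets) into the problem of recognizing elements of the exterior center of $\mathfrak{g}$. The key point I want to exploit is that both $[x,c]$ and $[c,x]$ automatically lie in $[\mathfrak{c},\mathfrak{c}]$, where $\phi$ is \emph{injective}, so vanishing upstairs is equivalent to vanishing of their images downstairs.

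First I would fix $x \in \mathfrak{c}$ and let $c$ range over $\mathfrak{c}$. Under $\phi$, we have $\phi([x,c]) = \pa^*(x) \curlywedge_1 \pa^*(c)$ directly from the definition of the isomorphism. For the opposite-order bracket, $\phi([c,x]) = \pa^*(c) \curlywedge_1 \pa^*(x)$, and invoking the relation $a \curlywedge_1 b = b \curlywedge_2 a$ that holds in $\mathfrak{g}\curlywedge\mathfrak{g}$ (since both arguments lie in the same algebra $\mathfrak{g}$), this equals $\pa^*(x) \curlywedge_2 \pa^*(c)$. Because $\pa^*$ is surjective, as $c$ varies over $\mathfrak{c}$ the element $\pa^*(c)$ varies over all of $\mathfrak{g}$.

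Now both implications are symmetric applications of injectivity of $\phi$. If $x \in Z(\mathfrak{c})$, then $[x,c] = 0 = [c,x]$ for every $c \in \mathfrak{c}$; applying $\phi$ gives $\pa^*(x) \curlywedge_1 g = 0 = \pa^*(x) \curlywedge_2 g$ for every $g \in \mathfrak{g}$, so $\pa^*(x) \in Z^{\curlywedge}(\mathfrak{g})$. Conversely, if $\pa^*(x) \in Z^{\curlywedge}(\mathfrak{g})$, then for every $c \in \mathfrak{c}$ both $\phi([x,c])$ and $\phi([c,x])$ vanish, and injectivity of $\phi$ on $[\mathfrak{c},\mathfrak{c}]$ forces $[x,c] = [c,x] = 0$, so $x \in Z(\mathfrak{c})$.

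There is essentially no obstacle here beyond bookkeeping: the content is entirely packaged into the previous lemma. The only mildly delicate point is making sure to correctly use the identification $a \curlywedge_1 b = b \curlywedge_2 a$ available in $\mathfrak{g}\curlywedge\mathfrak{g}$ in order to translate $\phi([c,x])$ into the form $\pa^*(x) \curlywedge_2 \pa^*(c)$ that matches the definition of $Z^{\curlywedge}(\mathfrak{g})$.
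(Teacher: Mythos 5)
Your proof is correct and follows essentially the same route as the paper: both directions reduce to the isomorphism $[\mathfrak{c},\mathfrak{c}]\cong\mathfrak{g}\curlywedge\mathfrak{g}$ of the preceding lemma, using surjectivity of $\pa^*$ for the forward implication and injectivity of the isomorphism for the converse. You are in fact slightly more explicit than the paper in tracking $[c,x]$ to $\pa^*(x)\curlywedge_2\pa^*(c)$ via the relation $a\curlywedge_1 b=b\curlywedge_2 a$, which is a welcome clarification rather than a deviation.
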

\begin{proof}
If $x\in Z(\mathfrak{c})$,  i.e. $[x, c]=[c,x]=0$ for all $c\in \mathfrak{c}$, since $\pa$ is an epimorphism, thanks to Lemma \ref{L:lemma4.2}  we get  $\pa^*(x)\curlywedge_1\pa^*(c)=\pa^*(x)\curlywedge_2\pa^*(c)=0$, i. e. $\pa^*(x)\in Z^{\curlywedge}(\mathfrak{g})$.

Conversely, if $\pa^*(x)\in Z^{\curlywedge}(\mathfrak{g})$,  since  the isomorphic images in $\mathfrak{g} \curlywedge \mathfrak{g}$  of both $[x,c], [c,x]\in [\mathfrak{c},\mathfrak{c}]$ are zero, then  $[x,c]= [c,x]=0$ for every $c\in \mathfrak{c}$. Thus $x\in Z(\mathfrak{c})$.
\end{proof}

We have proved that $\pa^*(Z(\mathfrak{c}))\subseteq Z^{\curlywedge}(\mathfrak{g})$ and the preimage of $Z^{\curlywedge}(\mathfrak{g})$ is $Z(\mathfrak{c})$, this means that
\begin{equation}\label{E:eq1}
\pa^*(Z(\mathfrak{c}))= Z^{\curlywedge}(\mathfrak{g}).
\end{equation}

\begin{theorem}\label{T:main1}
A Leibniz algebra $\mathfrak{g}$ is capable if and only if $Z^{\curlywedge}(\mathfrak{g})=0$.
\end{theorem}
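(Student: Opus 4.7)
The plan is to leverage the equality $\partial^{*}(Z(\mathfrak{c})) = Z^{\curlywedge}(\mathfrak{g})$ from \eqref{E:eq1}, where $\mathfrak{c} \overset{\partial^{*}}{\twoheadrightarrow} \mathfrak{g}$ is the central extension induced by a fixed free presentation $0 \to \mathfrak{r} \to \mathfrak{f} \overset{\partial}{\to} \mathfrak{g} \to 0$. Each direction of the equivalence will use this identity in a different way.

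For the ``if'' direction, suppose $Z^{\curlywedge}(\mathfrak{g}) = 0$. Then \eqref{E:eq1} forces $Z(\mathfrak{c}) \subseteq \ker(\partial^{*})$, and the reverse inclusion is automatic because $\partial^{*}$ is a central extension. Hence $\mathfrak{g} \cong \mathfrak{c}/Z(\mathfrak{c})$, exhibiting $\mathfrak{g}$ as capable with witness $\mathfrak{q} := \mathfrak{c}$. This half should be immediate.

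For the ``only if'' direction, assume $\mathfrak{g} \cong \mathfrak{q}/Z(\mathfrak{q})$ with quotient map $p \colon \mathfrak{q} \twoheadrightarrow \mathfrak{g}$, and pick an arbitrary $g \in Z^{\curlywedge}(\mathfrak{g})$; the goal is to show $g = 0$. Since $\mathfrak{f}$ is free and $p$ is surjective, I would first lift $\partial$ along $p$ to obtain $\phi \colon \mathfrak{f} \to \mathfrak{q}$ with $p \circ \phi = \partial$. Then $\phi(\mathfrak{r}) \subseteq \ker(p) = Z(\mathfrak{q})$ forces $\phi([\mathfrak{r}, \mathfrak{f}]) = 0$, so $\phi$ descends to $\phi^{*} \colon \mathfrak{c} \to \mathfrak{q}$ satisfying $p \circ \phi^{*} = \partial^{*}$. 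Using \eqref{E:eq1} I pick $x \in Z(\mathfrak{c})$ with $\partial^{*}(x) = g$; it then suffices to show $\phi^{*}(x) \in Z(\mathfrak{q})$, for in that case $g = p(\phi^{*}(x)) = 0$.

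The main obstacle is precisely this last step, namely verifying that $\phi^{*}$ carries $Z(\mathfrak{c})$ into $Z(\mathfrak{q})$. The key observation I would rely on is that every $q \in \mathfrak{q}$ admits a decomposition $q = \phi^{*}(c) + z$ with $c \in \mathfrak{c}$ and $z \in Z(\mathfrak{q})$: since $\partial^{*}$ is surjective, choose $c$ with $\partial^{*}(c) = p(q)$, so that $q - \phi^{*}(c) \in \ker(p) = Z(\mathfrak{q})$. Then $[\phi^{*}(x), q] = [\phi^{*}(x), \phi^{*}(c)] = \phi^{*}([x, c]) = 0$ because $x$ is central in $\mathfrak{c}$, and the bracket $[q, \phi^{*}(x)]$ vanishes symmetrically. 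This is the only non-formal step in the argument, and once it is in place the theorem follows.
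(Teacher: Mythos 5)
Your proof is correct, and both halves run on the same engine as the paper's: the identity $\partial^{*}(Z(\mathfrak{c}))=Z^{\curlywedge}(\mathfrak{g})$ from \eqref{E:eq1} together with a comparison homomorphism from the induced central extension $\mathfrak{c}$ to the witness $\mathfrak{q}$. The ``if'' direction is identical to the paper's. In the ``only if'' direction you differ in how the comparison map is produced and, consequently, in how one sees that it carries $Z(\mathfrak{c})$ into $Z(\mathfrak{q})$. The paper starts from a free presentation $\mathfrak{f}\twoheadrightarrow\mathfrak{q}$ of $\mathfrak{q}$ and composes with $\pi\colon\mathfrak{q}\twoheadrightarrow\mathfrak{g}$, so the induced map $\tau^{*}\colon\mathfrak{c}\to\mathfrak{q}$ is automatically surjective and the inclusion $\tau^{*}(Z(\mathfrak{c}))\subseteq Z(\mathfrak{q})$ is just the fact that epimorphisms send the center into the center. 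You instead fix an arbitrary free presentation of $\mathfrak{g}$ and lift $\partial$ through $p$ using freeness of $\mathfrak{f}$; the resulting $\phi^{*}$ need not be surjective, and you compensate with the observation that $\mathfrak{q}=\phi^{*}(\mathfrak{c})+Z(\mathfrak{q})$, which is exactly enough to conclude $\phi^{*}(Z(\mathfrak{c}))\subseteq Z(\mathfrak{q})$ and hence $g=p(\phi^{*}(x))=0$. Your version shows the argument works for \emph{every} free presentation of $\mathfrak{g}$, not just one manufactured from a presentation of $\mathfrak{q}$, at the cost of the extra lifting-and-decomposition step; the paper's arrangement makes the centrality step a one-liner. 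Both arguments are complete.
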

\begin{proof}
First suppose that $Z^{\curlywedge}(\mathfrak{g})=0$. Consider any free presentation  $0\to \mathfrak{r}\to \mathfrak{f} \to \mathfrak{g}\to 0$ of $\mathfrak{g}$ and its induced central extension $\mathfrak{c} \overset{\pa}{\longrightarrow} \mathfrak{g}$. It suffices to show $\Ker \pa =Z(\mathfrak{c}) $. We have $\Ker \pa \subseteq Z(\mathfrak{c}) $ because of the centrality. To see the inverse inclusion we note that given any $x\in Z(\mathfrak{c})$, by Corollary  \ref{C:coro} $\pa(x)\in Z^{\curlywedge}(\mathfrak{g})=0$ and so $x\in \Ker \pa.$

Conversely, let $\mathfrak{g}$ be capable. Then there is a Leibniz algebra $\mathfrak{q}$ such that $\mathfrak{q}/Z(\mathfrak{q})\cong \mathfrak{g}$. In other words, there is an epimorphism of Leibniz algebras $\pi:\mathfrak{q}\twoheadrightarrow \mathfrak{g}$ such that $\Ker \de =Z(\mathfrak{q})$.  Consider a free presentation  $0\to \mathfrak{s}\to \mathfrak{f} \overset{\tau}{\to} \mathfrak{q}\to 0$ of $\mathfrak{q}$. It implies a free presentation
$0\to \mathfrak{r}\to \mathfrak{f} \overset{\pa}{\to} \mathfrak{g}\to 0$ of $\mathfrak{g}$, where $\pa= \pi\tau$ and the kernel of $\pa$ is denoted by $\mathfrak{r}$. All these data give the following commutative diagram with exact rows

\begin{equation*}
\xymatrix@+10pt{
0\ \ar@{->}[r]& \mathfrak{r} \ \ar@{->}[r]
\ar@{->}[d]_{ }
 &\mathfrak{f}\ar@{->}[r]^{\pa}
\ar@{->}[d]_{\tau }
&\mathfrak{g} \ar@{->}[r]
\ar@{=}[d]_{ }
&0 \\
0\ \ar@{->}[r]
& Z(\mathfrak{q}) \ar@{->}[r]
 & \mathfrak{q} \ar@{->}[r]^{\pi}
& \mathfrak{g} \ar@{->}[r]
&0 .
}\end{equation*}
Now we claim that $\tau[\mathfrak{r},\mathfrak{f}]=0$. Indeed, given any $r\in \mathfrak{r}$ and $f\in \mathfrak{f}$ we have $\tau[r,f]=[\tau(r), \tau(f)]=0$, since $\tau(r)\in Z(Q) =\Ker\pi$. Let us denote $\mathfrak{f}/[\mathfrak{r}, \mathfrak{f}]$ by $\mathfrak{c}$. Then we have the following induced commutative triangle of Leibniz algebras

 \begin{equation*}
\xymatrix@+10pt{
& \mathfrak{c}\ar@{->}[rd]^{\pa^{*}}
\ar@{->}[rr]^{\tau^{*} }
& &\mathfrak{q} \ar@{->}[ld]_{\pi}
 \\
& &   \mathfrak{q} &
}
\end{equation*}
Since $\tau^{*}$ is an epimorphism, then $\tau^{*}(Z(\mathfrak{c}))\subseteq Z(\mathfrak{q})= \Ker \pi $, therefore $\pa^{*}(Z(\mathfrak{c}))=0$, i.e. $Z(\mathfrak{c})\subseteq \Ker \pa^{*}$. On the other hand, as we know $\pa^{*}:\mathfrak{c}\to \mathfrak{c}$ is a central extension and so $\Ker \pa^{*} \subseteq Z(\mathfrak{c})$. Thus $Z(\mathfrak{c})= \Ker \pa^{*}$. Then, by using the equality (\ref{E:eq1}) we get
\[
Z^{\curlywedge}(\mathfrak{g})=\pa^{*}(Z(\mathfrak{c}))=\pa^{*}(\Ker \pa^{*})=0.
\]
\end{proof}

\subsection{Coincidence of tensor and exterior  centers} In this subsection we will use some ideas from \cite{AnDoSiTh, BlFuMo, Ni2}

Given a Leibniz algebra $\mathfrak{g}$ we define
\[
\nabla(\mathfrak{g})=\Ker\left({\mathfrak{g}\star\mathfrak{g}\twoheadrightarrow \mathfrak{g}\curlywedge\mathfrak{g}}\right).
\]

\begin{lemma}\label{L:nabla}
For any Leibniz algebra $\mathfrak{g}$ there is an isomorphism
\[
\nabla(\mathfrak{g})\cong \nabla(\mathfrak{g}^{ab})
\]
\end{lemma}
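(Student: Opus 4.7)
The plan is to exhibit mutually inverse isomorphisms between $\nabla(\mathfrak{g})$ and $\nabla(\mathfrak{g}^{ab})$. The abelianization $\pi\colon\mathfrak{g}\twoheadrightarrow\mathfrak{g}^{ab}$ is equivariant for the self-actions on source and target (the self-action of the abelian $\mathfrak{g}^{ab}$ is trivial, so the compatibility conditions are automatic), hence by functoriality of $\star$ it induces a Leibniz algebra homomorphism $\Pi\colon\mathfrak{g}\star\mathfrak{g}\to\mathfrak{g}^{ab}\star\mathfrak{g}^{ab}$ given by $x*_iy\mapsto\bar x*_i\bar y$. Since $\Pi$ descends through the exterior quotient, it restricts to a map $\tilde\phi\colon\nabla(\mathfrak{g})\to\nabla(\mathfrak{g}^{ab})$, which is surjective on the natural generating set $\{x*_1y-y*_2x\}$.

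I would next identify the target. Because $\mathfrak{g}^{ab}$ is abelian, the bracket- and action-involving relations (3)--(5) of Definition~\ref{D:tensor} all collapse, so $\mathfrak{g}^{ab}\star\mathfrak{g}^{ab}$ is freely generated as a vector space by the bilinear symbols $\bar x*_i\bar y$ ($i=1,2$); in $\mathfrak{g}^{ab}\curlywedge\mathfrak{g}^{ab}$ one further identifies $\bar x*_1\bar y$ with $\bar y*_2\bar x$, and combining this with Lemma~\ref{L:lemma ex ten} yields
\[
\nabla(\mathfrak{g}^{ab})\;\cong\;\mathfrak{g}^{ab}\otimes\mathfrak{g}^{ab},\qquad \bar x\otimes\bar y\;\longleftrightarrow\;\bar x*_1\bar y-\bar y*_2\bar x.
\]
Under this identification the lemma reduces to showing that the bilinear map $\tilde\alpha\colon\mathfrak{g}\times\mathfrak{g}\to\nabla(\mathfrak{g})$, $(x,y)\mapsto x*_1y-y*_2x$, factors through $\mathfrak{g}^{ab}\otimes\mathfrak{g}^{ab}$; equivalently, that for all $a,b,x,y\in\mathfrak{g}$
\[
[a,b]*_1y-y*_2[a,b]\;=\;0\;=\;x*_1[a,b]-[a,b]*_2x\quad\text{in }\mathfrak{g}\star\mathfrak{g}.
\]

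These two vanishings constitute the main technical obstacle. To prove the first, I would expand $[a,b]*_1y$ by relation~(3c) and $y*_2[a,b]$ by~(3d), obtaining four mixed terms involving the single brackets $[a,y]$ and $[y,b]$; then use~(4a) and~(4b) to rewrite $a*_1[y,b]$ and $[y,b]*_2a$ in signed alternative forms, together with the cross-type identifications~(5a)--(5d) for the terms in which both arguments are themselves actions. A symmetry argument, obtained by swapping the roles of $b$ and $y$ in the same expansion, should produce a second expression for the same element; comparing the two forces the vanishing, in the spirit of the analogous manipulations carried out in \cite{Ni2} for Lie algebras and in \cite{AnDoSiTh, BlFuMo} for groups. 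The second vanishing is handled by a parallel argument. Once $\tilde\alpha$ has been shown to descend to $\alpha\colon\mathfrak{g}^{ab}\otimes\mathfrak{g}^{ab}\to\nabla(\mathfrak{g})$, composing with the identification of the previous paragraph produces a candidate inverse to $\tilde\phi$, and mutual inverseness is then verified on the generators.
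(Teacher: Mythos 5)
Your route is genuinely different from the paper's. The paper obtains the lemma in one stroke from \cite[Proposition 14]{DoGaKh}, which provides for every Leibniz algebra a natural short exact sequence $0\to\Gamma(\mathfrak{g}^{ab})\to\nabla(\mathfrak{g})\to(\cdot)\to0$ whose outer terms depend only on $\mathfrak{g}^{ab}$; comparing the sequences for $\mathfrak{g}$ and for $\mathfrak{g}^{ab}$ then forces the middle terms to agree. Your argument is self-contained, avoids the universal quadratic functor $\Gamma$ entirely, and yields the sharper explicit description $\nabla(\mathfrak{g})\cong\mathfrak{g}^{ab}\otimes\mathfrak{g}^{ab}$, $\bar x\otimes\bar y\mapsto x*_1y-y*_2x$, which is worth having. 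Steps (a), (b) and (d) of your plan --- functoriality of $\star$, the identification of $\nabla(\mathfrak{g}^{ab})$ via Gnedbaye's decomposition $\mathfrak{g}^{ab}\star\mathfrak{g}^{ab}\cong(\mathfrak{g}^{ab}\otimes\mathfrak{g}^{ab})\oplus(\mathfrak{g}^{ab}\otimes\mathfrak{g}^{ab})$, and the verification of mutual inverseness on spanning sets --- are all sound.

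The gap is that the two vanishings, which you correctly isolate as the crux, are not proved, and the method you sketch for them is unlikely to close as described. Comparing the expansion of $[a,b]*_1y-y*_2[a,b]$ via (3c), (3d) with the one obtained by swapping the roles of $b$ and $y$, and invoking (4a), (4b), produces relations of the shape $2(\cdots)=0$, which is vacuous in characteristic $2$ and does not isolate the individual terms even in characteristic $0$; and (5a)--(5d) only identify $u*_1v$ with $v*_2u$ when \emph{both} entries are brackets, which is not the case you need. Fortunately the identities do follow from (3a)--(3c) of Definition~\ref{D:tensor} alone, in two lines, and you should replace the sketch by this: relation (3c) gives $[a,b]*_1y=b*_2[a,y]-a*_1[y,b]$, while (3a) gives $a*_1[y,b]=[a,y]*_1b-[a,b]*_1y$; substituting the latter into the former and cancelling $[a,b]*_1y$ yields
\[
[a,y]*_1b=b*_2[a,y]\qquad\text{for all }a,y,b\in\mathfrak{g},
\]
i.e.\ $u*_1b-b*_2u=0$ for every $u\in[\mathfrak{g},\mathfrak{g}]$ and $b\in\mathfrak{g}$, which is your first vanishing. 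For the second, (3a) and (3b) give $x*_1[a,b]=[x,a]*_1b-[x,b]*_1a$ and $[a,b]*_2x=b*_2[x,a]-a*_2[x,b]$, and the displayed identity transforms the first right-hand side into the second, so $x*_1[a,b]-[a,b]*_2x=0$. With these computations inserted, your proof is complete and valid over any field.
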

\begin{proof}
As an easy consequence of \cite[Proposition 14]{DoGaKh} we have commutative diagram with exact rows
\begin{equation*}
\xymatrix@+10pt{
0\ \ar@{->}[r]& \Gamma(\mathfrak{g}^{ab}) \ \ar@{->}[r]
\ar@{=}[d]_{ }
 &\nabla(\mathfrak{g})\ar@{->}[r]
\ar@{->}[d]
&\mathfrak{g}^{ab}\curlywedge \mathfrak{g}^{ab}\ar@{->}[r]
\ar@{=}[d]_{ }
&0 \\
0\ \ar@{->}[r]
& \Gamma(\mathfrak{g}^{ab}) \ar@{->}[r]
  &\nabla(\mathfrak{g}^{ab}) \ar@{->}[r]
& \mathfrak{g}^{ab}\curlywedge \mathfrak{g}^{ab} \ar@{->}[r]
&0 ,
}\end{equation*}
where $\Gamma$ is the universal quadratic functor \cite{SiTy}. Then the required isomorphism follows.
\end{proof}

\begin{corollary}
Let $\pi: \mathfrak{g}\star \mathfrak{g}\twoheadrightarrow \mathfrak{g}^{ab}\star \mathfrak{g}^{ab}$ be the natural epimorphism. Then we have
\[
\Ker\pi\cap \nabla(\mathfrak{g})= \{0\}.
\]
\end{corollary}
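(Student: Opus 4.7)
The plan is to recognise that the restriction $\pi|_{\nabla(\mathfrak{g})}$ is precisely the isomorphism $\nabla(\mathfrak{g}) \xrightarrow{\cong} \nabla(\mathfrak{g}^{ab})$ produced in Lemma \ref{L:nabla}; once this identification is made, the conclusion $\Ker\pi \cap \nabla(\mathfrak{g}) = \{0\}$ is immediate, because the kernel of an isomorphism is trivial and $\Ker(\pi|_{\nabla(\mathfrak{g})}) = \Ker\pi \cap \nabla(\mathfrak{g})$.

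The first step is to observe that $\pi$ does restrict to a well-defined map $\nabla(\mathfrak{g}) \to \nabla(\mathfrak{g}^{ab})$. This is purely functorial: the abelianisation $\mathfrak{g} \twoheadrightarrow \mathfrak{g}^{ab}$ induces a commutative square between the defining projections $\mathfrak{g}\star\mathfrak{g} \twoheadrightarrow \mathfrak{g}\curlywedge\mathfrak{g}$ and $\mathfrak{g}^{ab}\star\mathfrak{g}^{ab} \twoheadrightarrow \mathfrak{g}^{ab}\curlywedge\mathfrak{g}^{ab}$, so $\pi$ carries kernels to kernels. The second step is to identify this restriction with the middle vertical arrow of the ladder appearing in the proof of Lemma \ref{L:nabla}; both arrows are induced by the abelianisation map at the level of $\mathfrak{g}\star\mathfrak{g}$, hence they agree. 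Since the two outer vertical arrows in that ladder are the identity on $\Gamma(\mathfrak{g}^{ab})$ and on $\mathfrak{g}^{ab}\curlywedge\mathfrak{g}^{ab}$ respectively, the five-lemma forces the middle arrow to be an isomorphism.

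The conclusion $\Ker\pi \cap \nabla(\mathfrak{g}) = \{0\}$ then drops out directly: an element of the intersection lies in the kernel of the restriction $\pi|_{\nabla(\mathfrak{g})}$, which is injective. I do not expect any genuine obstacle; the only point that needs a moment's care is confirming that the naturality square relating $\pi$ to the quotient maps onto the exterior products is the same square that produces the middle vertical arrow in the proof of Lemma \ref{L:nabla}, so that the two identifications of the isomorphism $\nabla(\mathfrak{g}) \cong \nabla(\mathfrak{g}^{ab})$ actually coincide.
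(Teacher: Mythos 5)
Your proof is correct and follows essentially the route the paper intends: the corollary is stated there without proof as an immediate consequence of Lemma \ref{L:nabla}, and your observation that $\Ker\pi\cap\nabla(\mathfrak{g})=\Ker\bigl(\pi|_{\nabla(\mathfrak{g})}\bigr)$ together with the identification of $\pi|_{\nabla(\mathfrak{g})}$ with the isomorphism of that lemma is exactly the missing detail. The caveat you flag --- that the middle vertical arrow of the ladder in the proof of Lemma \ref{L:nabla} is the map induced by abelianisation, hence the restriction of $\pi$ --- is the right point to check, and it holds by naturality of the diagram taken from \cite{DoGaKh}.
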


\begin{proposition}\label{P:pr}
Given a Leibniz algebra $\mathfrak{g}$, there is an isomorphism of Leibniz algebras
\begin{equation}\label{E:eq2}
\mathfrak{g}\star \mathfrak{g}\cong (\mathfrak{g} \curlywedge\mathfrak{g})\times \nabla(\mathfrak{g}).
\end{equation}
\end{proposition}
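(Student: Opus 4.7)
The idea is to construct a Leibniz algebra retraction $\rho \colon \mathfrak{g}\star\mathfrak{g} \to \nabla(\mathfrak{g})$ and assemble it with the canonical quotient $q \colon \mathfrak{g}\star\mathfrak{g} \twoheadrightarrow \mathfrak{g}\curlywedge\mathfrak{g}$ into the map
\[
\Phi := (q,\rho) \colon \mathfrak{g}\star\mathfrak{g} \longrightarrow (\mathfrak{g}\curlywedge\mathfrak{g}) \times \nabla(\mathfrak{g}),
\]
which I would verify is the desired isomorphism. Since the bracket on a direct product of Leibniz algebras is computed componentwise, $\Phi$ is automatically a Leibniz homomorphism once $q$ and $\rho$ are; moreover, the direct product structure on the target forces the two factors to mutually annihilate inside $\mathfrak{g}\star\mathfrak{g}$ once $\Phi$ is known to be an isomorphism.

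First I would observe that $\mathfrak{g}^{\ab}\star\mathfrak{g}^{\ab}$ is an abelian Leibniz algebra: the mutual action of $\mathfrak{g}^{\ab}$ on itself, inherited from the bracket of $\mathfrak{g}$, is trivial, so relations (5a)--(5d) of Definition \ref{D:tensor} force every bracket of generators to vanish. Consequently the short exact sequence
\[
0 \longrightarrow \nabla(\mathfrak{g}^{\ab}) \longrightarrow \mathfrak{g}^{\ab}\star\mathfrak{g}^{\ab} \longrightarrow \mathfrak{g}^{\ab}\curlywedge\mathfrak{g}^{\ab} \longrightarrow 0
\]
is a short exact sequence of vector spaces; any linear retraction $\tilde\rho \colon \mathfrak{g}^{\ab}\star\mathfrak{g}^{\ab} \to \nabla(\mathfrak{g}^{\ab})$ is then automatically a Leibniz homomorphism.

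Next, the proof of Lemma \ref{L:nabla} identifies $\phi := \pi|_{\nabla(\mathfrak{g})} \colon \nabla(\mathfrak{g}) \to \nabla(\mathfrak{g}^{\ab})$ as an isomorphism of Leibniz algebras; as a by-product $\nabla(\mathfrak{g})$ is abelian, since $\nabla(\mathfrak{g}^{\ab})$ sits inside the abelian Leibniz algebra $\mathfrak{g}^{\ab}\star\mathfrak{g}^{\ab}$. I would then define
\[
\rho := \phi^{-1}\circ\tilde\rho\circ\pi \colon \mathfrak{g}\star\mathfrak{g} \longrightarrow \nabla(\mathfrak{g}),
\]
a composition of Leibniz homomorphisms. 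For $z\in\nabla(\mathfrak{g})$ one has $\pi(z)=\phi(z)\in\nabla(\mathfrak{g}^{\ab})$, which $\tilde\rho$ fixes and $\phi^{-1}$ sends back to $z$; hence $\rho|_{\nabla(\mathfrak{g})}=\id$.

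With $\Phi = (q,\rho)$ in place, injectivity is immediate: $q(x)=0$ forces $x\in\nabla(\mathfrak{g})$, and then $\rho(x)=x=0$. For surjectivity, given $(y,z)$ I would lift $y$ to some $x_0\in q^{-1}(y)$ and set $x := x_0 - \rho(x_0) + z$, which lies in $\Phi^{-1}(y,z)$ by a direct check using the retraction property together with $\nabla(\mathfrak{g})\subseteq\Ker q$. The main obstacle is the first step: verifying that $\mathfrak{g}^{\ab}\star\mathfrak{g}^{\ab}$ is abelian hinges on correctly recognising the mutual action of $\mathfrak{g}^{\ab}$ on itself as trivial and then tracking how this collapses the quadratic defining relations; once this is settled, the remainder is a clean diagram chase that combines Lemma \ref{L:nabla} with its Corollary.
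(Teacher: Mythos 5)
Your proposal is correct and follows essentially the same route as the paper: the authors also obtain the splitting by noting that the bottom row of the comparison diagram consists of abelian Leibniz algebras (so the inclusion $\nabla(\mathfrak{g}^{\ab})\hookrightarrow\mathfrak{g}^{\ab}\star\mathfrak{g}^{\ab}$ admits a linear, hence Leibniz, retraction $j'$) and then defining the retraction of $\nabla(\mathfrak{g})\hookrightarrow\mathfrak{g}\star\mathfrak{g}$ as the composite of the middle vertical map, $j'$, and the inverse of the isomorphism from Lemma \ref{L:nabla}. You merely spell out the final bijectivity check that the paper summarizes as ``the required isomorphism follows.''
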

\begin{proof}
It follows easily by defining relation (5a)-(5d) of the non-abelian tensor product in Definition \ref{D:tensor} that $\mathfrak{g}\star \mathfrak{g}$, $\mathfrak{g} \curlywedge\mathfrak{g}$ and $\nabla(\mathfrak{g})$ are abelian Leibniz algebras, i. e. just vector spaces. So the equality (\ref{E:eq2}) holds in this case.

Now, given any not necessarily abelian Leibniz algebra $\mathfrak{g}$, consider the following commutative diagram of Leibniz algebras with exact rows
\begin{equation*}
\xymatrix@+10pt{
0\ \ar@{->}[r]& \nabla(\mathfrak{g}) \ \ar@{->}[r]^{i}
\ar@{->}[d]_{\cong }
 &\mathfrak{g}\star \mathfrak{g}\ar@{->}[r]
\ar@{->}[d]
&\mathfrak{g}\curlywedge \mathfrak{g}\ar@{->}[r]
\ar@{=}[d]_{ }
&0 \\
0\ \ar@{->}[r]
& \nabla(\mathfrak{g}^{ab}) \ar@{->}[r]^{j}
  &\mathfrak{g}^{ab}\star \mathfrak{g}^{ab}  \ar@{->}[r]
& \mathfrak{g}^{ab}\curlywedge \mathfrak{g}^{ab} \ar@{->}[r]
&0.
}\end{equation*}
The bottom row is the exact sequence of abelian Leibniz algebras, i. e. just vector spaces. Then the injection $j$ has the left inverse $j':\mathfrak{g}^{ab}\star \mathfrak{g}^{ab}\to \nabla(\mathfrak{g}^{ab})$, that is $j' j=\id_{\nabla(\mathfrak{g}^{ab})}$. Since  the left vertical map is an isomorphism by Lemma \ref{L:nabla}, there is a left inverse homomorphism of $i$ defined by the composition three homomorphisms: the middle vertical map, $j'$ and the inverse of the left vertical map. Then the required isomorphism follows.
\end{proof}

\begin{theorem}\label{T:t_e_centers}
For any Leibniz algebra $\mathfrak{g}$ we have
\[
Z^{\star}(\mathfrak{g})=Z^{\curlywedge}(\mathfrak{g})\cap [\mathfrak{g},\mathfrak{g}].
\]
\end{theorem}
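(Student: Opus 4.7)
The plan is to prove both inclusions separately, using the splitting $\mathfrak{g}\star\mathfrak{g}\cong (\mathfrak{g}\curlywedge\mathfrak{g})\times \nabla(\mathfrak{g})$ of Proposition \ref{P:pr} together with the natural epimorphism $\mathfrak{g}\star\mathfrak{g}\twoheadrightarrow \mathfrak{g}^{\ab}\star\mathfrak{g}^{\ab}$ induced by abelianization.

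For the forward inclusion $Z^{\star}(\mathfrak{g})\subseteq Z^{\curlywedge}(\mathfrak{g})\cap[\mathfrak{g},\mathfrak{g}]$, containment in $Z^{\curlywedge}(\mathfrak{g})$ is immediate from the epimorphism $\pi\colon \mathfrak{g}\star\mathfrak{g}\twoheadrightarrow \mathfrak{g}\curlywedge\mathfrak{g}$: if $g\star_i x=0$ then $g\curlywedge_i x=0$. To show $g\in[\mathfrak{g},\mathfrak{g}]$, I would push the identities $g\star_1 x = g\star_2 x = 0$ forward to $\mathfrak{g}^{\ab}\star \mathfrak{g}^{\ab}$. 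Since $\mathfrak{g}^{\ab}$ is abelian, all defining relations of the tensor product beyond bilinearity collapse, yielding $\mathfrak{g}^{\ab}\star \mathfrak{g}^{\ab}\cong (\mathfrak{g}^{\ab}\otimes \mathfrak{g}^{\ab})\oplus(\mathfrak{g}^{\ab}\otimes \mathfrak{g}^{\ab})$, where $\bar g\star_1 \bar x$ corresponds to $\bar g\otimes\bar x$ in the first summand (and $\bar g\star_2 \bar x$ to $\bar g\otimes\bar x$ in the second). Vanishing of this for every $\bar x\in\mathfrak{g}^{\ab}$ forces $\bar g=0$ in the ordinary tensor product, i.e.\ $g\in[\mathfrak{g},\mathfrak{g}]$.

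For the reverse inclusion, take $g\in Z^{\curlywedge}(\mathfrak{g})\cap[\mathfrak{g},\mathfrak{g}]$ and any $x\in\mathfrak{g}$. Under the isomorphism of Proposition \ref{P:pr}, it is enough to check that both components of $g\star_i x$ vanish. The projection to $\mathfrak{g}\curlywedge\mathfrak{g}$ equals $g\curlywedge_i x=0$ by hypothesis. For the projection to $\nabla(\mathfrak{g})$, the retraction constructed in the proof of Proposition \ref{P:pr} factors as $\mathfrak{g}\star\mathfrak{g}\to \mathfrak{g}^{\ab}\star\mathfrak{g}^{\ab}\to \nabla(\mathfrak{g}^{\ab})\cong \nabla(\mathfrak{g})$. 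Since $g\in[\mathfrak{g},\mathfrak{g}]$ we have $\bar g=0$ in $\mathfrak{g}^{\ab}$, hence the image of $g\star_i x$ in $\mathfrak{g}^{\ab}\star\mathfrak{g}^{\ab}$ is already zero, which makes the $\nabla(\mathfrak{g})$-component zero as well. Thus $g\star_i x=0$ and $g\in Z^{\star}(\mathfrak{g})$.

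The main conceptual point, and the only step that is not pure bookkeeping, is the observation that the retraction $\mathfrak{g}\star\mathfrak{g}\twoheadrightarrow \nabla(\mathfrak{g})$ supplied by Proposition \ref{P:pr} is the one that factors through abelianization; this is what makes the hypothesis $g\in[\mathfrak{g},\mathfrak{g}]$ effective, and it is why one must appeal to the construction of the splitting rather than merely its existence. Once that compatibility is in hand, both inclusions reduce to the same two ingredients: the splitting of $0\to\nabla(\mathfrak{g})\to\mathfrak{g}\star\mathfrak{g}\to\mathfrak{g}\curlywedge\mathfrak{g}\to 0$ and the explicit description $V\star V\cong (V\otimes V)\oplus(V\otimes V)$ for abelian $V$.
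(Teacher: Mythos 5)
Your proof is correct and follows essentially the same route as the paper: both rest on the splitting $\mathfrak{g}\star\mathfrak{g}\cong(\mathfrak{g}\curlywedge\mathfrak{g})\times\nabla(\mathfrak{g})$ (equivalently, the fact that $\nabla(\mathfrak{g})\cap\Ker\bigl(\mathfrak{g}\star\mathfrak{g}\to\mathfrak{g}^{\ab}\star\mathfrak{g}^{\ab}\bigr)=0$) and on the identification $\mathfrak{g}^{\ab}\star\mathfrak{g}^{\ab}\cong(\mathfrak{g}^{\ab}\otimes\mathfrak{g}^{\ab})\oplus(\mathfrak{g}^{\ab}\otimes\mathfrak{g}^{\ab})$. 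The only cosmetic difference is in showing $Z^{\star}(\mathfrak{g})\subseteq[\mathfrak{g},\mathfrak{g}]$: the paper specializes to the diagonal element $x\star_i x$ and a basis argument, while you use $g\star_i x=0$ for all $x$ and non-degeneracy of the ordinary tensor product over a field; both are valid.
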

\begin{proof}
Take $x\in Z^{\curlywedge}(\mathfrak{g})\cap [\mathfrak{g},\mathfrak{g}]$. Then  $x\in Z^{\curlywedge}(\mathfrak{g})$ implies that both $g\star_1 x, \; g\star_2 x\in \nabla{(\mathfrak{g})}$ and   $x\in [\mathfrak{g},\mathfrak{g}]$ implies that
$g\star_1 x, \; g\star_2 x\in \Ker(\pi: \mathfrak{g}\star \mathfrak{g}\twoheadrightarrow \mathfrak{g}^{ab}\star \mathfrak{g}^{ab} )$ for any $g\in \mathfrak{g}$, i.e. $g\star_1 x, \; g\star_2 x\in \nabla{(\mathfrak{g})}\cap \Ker\pi$.  Since $\nabla{(\mathfrak{g})}\cap \Ker\pi =\{0\}$ by Corollary \ref{C:coro}, we have that $x\in Z^{\star}(\mathfrak{g})$. Thus  $ Z^{\curlywedge}(\mathfrak{g})\cap [\mathfrak{g},\mathfrak{g}]  \subseteq Z^{\star}(\mathfrak{g})$.

For the inverse inclusion, since $Z^{\star}(\mathfrak{g})\subseteq Z^{\curlywedge}(\mathfrak{g})$, it suffices to show that  $Z^{\star}(\mathfrak{g})\subseteq [\mathfrak{g}, \mathfrak{g}]$. Take $x\in Z^{\star}(\mathfrak{g})$, that is $x\in \mathfrak{g}$ such that $x\star_1 g=x\star_2 g=0$ for all $g\in \mathfrak{g}$. In particular, $x\star_1 x=x\star_2 x=0$ in $\mathfrak{g}\star \mathfrak{g}$ and
$\overline{x}\star_1 \overline{x}=\overline{x}\star_2 \overline{x}=0$ in $g^{ab}\star \mathfrak{g}^{ab}$, where $\overline{x}$ denotes the coset $x+[\mathfrak{g}, \mathfrak{g}]$. If we assume $x\notin [\mathfrak{g}, \mathfrak{g}]$ then we can deduce that $\overline{x}\otimes \overline{x}\neq 0$ in  $\mathfrak{g}^{ab}\otimes \mathfrak{g}^{ab}$. Indeed, by considering a bases of the vector space $\mathfrak{g}$ containing $x$, then the corresponding bases of $\mathfrak{g}^{ab}\otimes \mathfrak{g}^{ab}$ contains  $\overline{x}\otimes \overline{x}$. This contradicts to the natural isomorphism
\[
\mathfrak{g}^{ab}\star \mathfrak{g}^{ab}\cong \mathfrak{g}^{ab}\otimes \mathfrak{g}^{ab}\oplus \mathfrak{g}^{ab}\otimes \mathfrak{g}^{ab},
\]
which is given in \cite[Proposition 4.2]{Gn99} in a more general setting.
\end{proof}

\begin{corollary}\label{C:main}
For any Leibniz algebra $\mathfrak{g}$ we have
\[
Z^{\star}(\mathfrak{g})=Z^{\curlywedge}(\mathfrak{g}).
\]
\end{corollary}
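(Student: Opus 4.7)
The plan is to derive the corollary directly from Theorem \ref{T:t_e_centers}. Since the inclusion $Z^{\star}(\mathfrak{g})\subseteq Z^{\curlywedge}(\mathfrak{g})$ is already observed in Section~\ref{centers}, and Theorem \ref{T:t_e_centers} gives $Z^{\star}(\mathfrak{g})=Z^{\curlywedge}(\mathfrak{g})\cap[\mathfrak{g},\mathfrak{g}]$, the corollary reduces to the single inclusion
\[
Z^{\curlywedge}(\mathfrak{g})\subseteq [\mathfrak{g},\mathfrak{g}].
\]

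To prove this, I would take $x\in Z^{\curlywedge}(\mathfrak{g})$ and run the same ``pass to the abelianization'' argument that appears at the end of the proof of Theorem \ref{T:t_e_centers}, but applied on the exterior side. By functoriality of $\curlywedge$ applied to the canonical projection $p\colon \mathfrak{g}\twoheadrightarrow \mathfrak{g}^{ab}$, there is an induced homomorphism
\[
p\curlywedge p\colon \mathfrak{g}\curlywedge\mathfrak{g}\longrightarrow \mathfrak{g}^{ab}\curlywedge\mathfrak{g}^{ab},\qquad x\curlywedge_i y\longmapsto \overline{x}\curlywedge_i\overline{y}.
\]
The hypothesis $x\in Z^{\curlywedge}(\mathfrak{g})$ forces $\overline{x}\curlywedge_1\overline{y}=0$ in $\mathfrak{g}^{ab}\curlywedge\mathfrak{g}^{ab}$ for every $y\in\mathfrak{g}$. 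Now using Lemma \ref{L:lemma ex ten}, $\mathfrak{g}^{ab}\curlywedge\mathfrak{g}^{ab}\cong \mathfrak{g}^{ab}\otimes\mathfrak{g}^{ab}$, and the generator $\overline{x}\curlywedge_1\overline{y}$ corresponds to the pure tensor $\overline{x}\otimes\overline{y}$. If $\overline{x}\neq 0$, we can extend $\overline{x}$ to a basis of $\mathfrak{g}^{ab}$ and exhibit $\overline{y}$ for which $\overline{x}\otimes\overline{y}\neq 0$, contradicting the previous vanishing. Hence $\overline{x}=0$, i.e.\ $x\in[\mathfrak{g},\mathfrak{g}]$, as required.

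The only step requiring a bit of care is the compatibility of the two types of generators $\curlywedge_1,\curlywedge_2$ under abelianization and under the isomorphism of Lemma \ref{L:lemma ex ten}; but this is the exterior analog of the same linear algebra trick (choosing a basis of $\mathfrak{g}$ containing a lift of $x$) that is already used in the proof of Theorem \ref{T:t_e_centers}, so there is no genuine obstacle. The only thing to double-check is the functoriality of $\curlywedge$ with respect to morphisms of crossed modules, which follows from the functoriality of the non-abelian tensor product $\star$ together with the fact that the defining relations of $\mathfrak{g}\square\mathfrak{g}$ are preserved by morphisms of Leibniz algebras.
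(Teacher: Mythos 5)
Your proposal is correct and follows essentially the same route as the paper: reduce via Theorem \ref{T:t_e_centers} to showing $Z^{\curlywedge}(\mathfrak{g})\subseteq[\mathfrak{g},\mathfrak{g}]$, pass to the abelianization, and use Lemma \ref{L:lemma ex ten} together with the basis argument to get a contradiction from $\overline{x}\neq 0$ (the paper simply takes $\overline{y}=\overline{x}$ rather than a general $\overline{y}$).
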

\begin{proof}
By using Theorem \ref{T:t_e_centers} it suffices to show that $Z^{\curlywedge}(\mathfrak{g})\subseteq [\mathfrak{g},\mathfrak{g}]$. Take $x\in Z^{\curlywedge}(\mathfrak{g})$, that is $x\in \mathfrak{g}$ such that $x\curlywedge_1 g=x\curlywedge_2 g=0$ for all $g\in \mathfrak{g}$. In particular, $x\curlywedge_1 x=x\curlywedge_2 x=0$ in $\mathfrak{g}\curlywedge \mathfrak{g}$ and
$\overline{x}\curlywedge_1 \overline{x}=\overline{x}\curlywedge_2 \overline{x}=0$ in $\mathfrak{g}^{ab}\curlywedge \mathfrak{g}^{ab}$, where $\overline{x}$ denotes the coset $x+[\mathfrak{g}, \mathfrak{g}]$.
If we assume $x\notin [\mathfrak{g}, \mathfrak{g}]$, as in the proof of Theorem  \ref{T:t_e_centers}, we deduce that $\overline{x}\otimes \overline{x}\neq 0$ in  $\mathfrak{g}^{ab}\otimes \mathfrak{g}^{ab}$. But this contradicts to the natural isomorphism $\mathfrak{g}^{ab}\curlywedge \mathfrak{g}^{ab}\cong \mathfrak{g}^{ab}\otimes \mathfrak{g}^{ab} $, which holds by Lemma \ref{L:lemma ex ten}.
\end{proof}

\section{Capability of Lie algebras in the category of Leibniz algebras }\label{S:lie}

 As we mentioned above, a Lie algebra $L$ is capable if there exist a Lie algebra $Q$ such that
the quotient of $Q$ by its center is isomorphic to $L$. On the other hand, a Lia algebra $L$ can be viewed as a Leibniz algebra and one can
define the capability of $L$ in the category of Leibniz algebras. It is clear that if $L$ is capable in the category of Lie
algebras then $L$ is capable in the category of Leibniz algebras. The converse may not be true in general. The simplest example is
one dimensional Lie algebra, i.e. $\mathbb{K}$ equipped with the trivial Lie bracket. We show that there are other examples of Lie
algebras capable in the category of Leibniz algebras but not capable in the category of Lie algebras. First we prove the
following.

\begin{proposition}\label{P:5.1}
Let $\mathfrak{g}$ be a perfect Leibniz algebra. Then $\mathfrak{g}$ is capable if and only if $Z(\mathfrak{g})$ is trivial.
\end{proposition}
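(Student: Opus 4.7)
The strategy is to reduce the biconditional to Theorem \ref{T:main1} by establishing the coincidence $Z(\mathfrak{g})=Z^{\curlywedge}(\mathfrak{g})$ in the perfect case. Theorem \ref{T:main1} characterises capability of $\mathfrak{g}$ as the vanishing of $Z^{\curlywedge}(\mathfrak{g})$, and the inclusion $Z^{\curlywedge}(\mathfrak{g})\subseteq Z(\mathfrak{g})$ has already been established for every Leibniz algebra. So once the reverse inclusion $Z(\mathfrak{g})\subseteq Z^{\curlywedge}(\mathfrak{g})$ is verified under the perfectness hypothesis, the conditions $Z^{\curlywedge}(\mathfrak{g})=0$ and $Z(\mathfrak{g})=0$ become equivalent and Theorem \ref{T:main1} finishes both directions.

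To prove the missing inclusion I would work inside the non-abelian tensor product. Proposition \ref{P:perfect} lets me replace $\mathfrak{g}\curlywedge\mathfrak{g}$ by $\mathfrak{g}\star\mathfrak{g}$, so it suffices to show $z\in Z^{\star}(\mathfrak{g})$ for every $z\in Z(\mathfrak{g})$, i.e.\ that $z\star_1 x=z\star_2 x=0$ for all $x\in\mathfrak{g}$. Perfectness writes any such $x$ as a linear combination of commutators $[a,b]$, and the bilinearity relations (2a)--(2b) reduce the task to checking $z\star_1[a,b]=0$ and $z\star_2[a,b]=0$. At this point the mixed-bracket identities (3a) and (3d) of Definition \ref{D:tensor} expand these into expressions of the form $z^{a}\star_1 b - z^{b}\star_1 a$ and ${}^{a}z\star_1 b - z^{b}\star_2 a$ respectively, and each of $z^{a}$, $z^{b}$, ${}^{a}z$ is the corresponding Leibniz bracket of $z$ with an element of $\mathfrak{g}$, hence vanishes by centrality of $z$.

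The main obstacle is purely bookkeeping: the tensor product has two families of generators and four mixed relations (3a)--(3d) with asymmetric roles for $\star_1$ and $\star_2$, so one has to pick the right identity for each case and keep track of which side of the bracket carries $z$. No deeper tool is required; once the correct relations are applied, centrality forces every right-hand side to zero and the chain of containments $Z(\mathfrak{g})\subseteq Z^{\star}(\mathfrak{g})\subseteq Z^{\curlywedge}(\mathfrak{g})$ closes the argument.
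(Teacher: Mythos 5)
Your proposal is correct, and its overall architecture is the same as the paper's: both reduce the statement to Theorem \ref{T:main1} by proving $Z(\mathfrak{g})=Z^{\star}(\mathfrak{g})=Z^{\curlywedge}(\mathfrak{g})$ for perfect $\mathfrak{g}$, with the only substantive work being the inclusion $Z(\mathfrak{g})\subseteq Z^{\star}(\mathfrak{g})$. Where you differ is in how that inclusion is verified. You argue at the level of generators: perfectness writes $x$ as a sum of brackets $[a,b]$, and the relations (3a) and (3d) expand $z\star_1[a,b]$ and $z\star_2[a,b]$ into terms involving $z^{a}$, $z^{b}$, ${}^{a}z$, all of which vanish by centrality of $z$ (together with $0\star_i y=0$ from additivity). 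This is a self-contained computation using only Definition \ref{D:tensor}. The paper instead observes that the image of the natural map $Z(\mathfrak{g})\star\mathfrak{g}\to\mathfrak{g}\star\mathfrak{g}$ contains all the elements $z\star_i x$ in question, and that $Z(\mathfrak{g})\star\mathfrak{g}=0$ because the mutual actions are trivial, so this tensor product depends only on the abelianizations and $\mathfrak{g}^{\ab}=0$ by perfectness. The paper's route is shorter but leans on an external structural fact about tensor products with trivial actions (from \cite{Gn99}) and on the compatibility of the map $Z(\mathfrak{g})\star\mathfrak{g}\to\mathfrak{g}\star\mathfrak{g}$ with the generators; yours is more elementary and makes the role of perfectness completely explicit. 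Both are valid, and your bookkeeping with (3a) and (3d) is carried out correctly.
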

\begin{proof}
It suffices to show that $Z(\mathfrak{g})=Z^{\star}(\mathfrak{g})$. As we have seen, the implication
$Z^{\star}(\mathfrak{g})\subseteq Z(\mathfrak{g})$ is always true.  Show that if $\mathfrak{g}$ is perfect, then the reverse
implication $Z(\mathfrak{g})\subseteq Z^{\star}(\mathfrak{g})$ also holds. We have the natural homomorphism
 $Z(\mathfrak{g})\star \mathfrak{g} \to \mathfrak{g}\star \mathfrak{g}$, the image of which contains all the elements of the form
$g\star_1 x$ and $g\star_2 x$, where $g\in Z(\mathfrak{g})$ and $x\in \mathfrak{g}$. Thus, it suffices to show that
$Z(\mathfrak{g})\star \mathfrak{g}=0$. Since $g$ and $Z(\mathfrak{g})$ are acting trivially on each other, we have
\[
Z(\mathfrak{g})\star \mathfrak{g}=Z(\mathfrak{g})\star \mathfrak{g}^{ab} =0.
\]
\end{proof}

\begin{remark}
The analogue of Proposition \ref{P:5.1} in the category of Lie algebras can be proved in the same way.
\end{remark}

\begin{corollary}
Let $L$ be a perfect Lie algebra. Then $L$ is capable in the category of Lie algebras if and only if $L$ is capable in the
category of Leibniz algebras.
\end{corollary}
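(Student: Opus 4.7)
The plan is to leverage Proposition \ref{P:5.1} together with its Lie-algebra analogue (mentioned in the Remark following the proposition) and reduce the equivalence to the observation that the two notions of center agree for a Lie algebra.

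First I would note that a Lie algebra $L$ that is perfect in the Lie category remains perfect when viewed as a Leibniz algebra, since the bracket, and hence the commutator subalgebra $[L,L]$, is unchanged by forgetting antisymmetry. Thus both Proposition \ref{P:5.1} and its Lie analogue apply to $L$: the algebra $L$ is capable in $\Lb$ if and only if $Z_{\Lb}(L)=0$, and $L$ is capable in $\Lie$ if and only if $Z_{\Lie}(L)=0$.

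Next I would verify that these two centers coincide for a Lie algebra. For $L$ a Lie algebra, any $c\in L$ satisfies $[c,x]=-[x,c]$, so the condition $[x,c]=0$ for all $x$ is equivalent to the conjunction $[x,c]=0=[c,x]$ for all $x$. Consequently $Z_{\Lie}(L)=Z_{\Lb}(L)$, which immediately yields the desired equivalence.

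There is essentially no obstacle here: the main content has already been packaged in Proposition \ref{P:5.1} and its Lie counterpart. The only thing to be careful about is keeping the two notions of center distinct in notation long enough to observe they agree, and noting that ``perfect'' is a property that is insensitive to whether one is in $\Lie$ or $\Lb$, so that both versions of the proposition genuinely apply to the same $L$.
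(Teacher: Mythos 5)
Your proposal is correct and matches the paper's intended argument exactly: the corollary is stated without proof immediately after Proposition \ref{P:5.1} and the remark that its Lie-algebra analogue holds, precisely because capability in either category reduces to the vanishing of the (common) center of the perfect algebra $L$. Your additional care in checking that perfectness and the center are insensitive to the choice of category is exactly the routine verification the paper leaves implicit.
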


We need the following result.

\begin{proposition}\label{P:5.4}
Let $\mathfrak{g}$ be a perfect Leibniz algebra. Then for any Leibniz algebra $\mathfrak{h}$ we have:
\[
Z^{\star}(\mathfrak{g}\times \mathfrak{h}) = Z^{\star}(\mathfrak{g})\times Z^{\star}(\mathfrak{h}).
\]
\end{proposition}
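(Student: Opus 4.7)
The plan is to prove the two inclusions separately. The reverse inclusion $Z^{\star}(\mathfrak{g}\times\mathfrak{h})\subseteq Z^{\star}(\mathfrak{g})\times Z^{\star}(\mathfrak{h})$ is the formal direction: the projections $\mathfrak{g}\times\mathfrak{h}\twoheadrightarrow \mathfrak{g}$ and $\mathfrak{g}\times\mathfrak{h}\twoheadrightarrow \mathfrak{h}$ are Leibniz algebra homomorphisms that are compatible with the actions (each factor being an ideal of the product), and hence they induce homomorphisms $(\mathfrak{g}\times\mathfrak{h})\star(\mathfrak{g}\times\mathfrak{h})\to \mathfrak{g}\star\mathfrak{g}$ and $(\mathfrak{g}\times\mathfrak{h})\star(\mathfrak{g}\times\mathfrak{h})\to \mathfrak{h}\star\mathfrak{h}$ sending $(g,h)*_i(g',h')$ to $g*_ig'$ and to $h*_ih'$ respectively. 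If $(g,h)\in Z^{\star}(\mathfrak{g}\times\mathfrak{h})$, then evaluating these on $(g',0)$ and $(0,h')$ for arbitrary $g'\in\mathfrak{g}$, $h'\in\mathfrak{h}$ and $i=1,2$ gives $g\in Z^{\star}(\mathfrak{g})$ and $h\in Z^{\star}(\mathfrak{h})$.

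For the forward inclusion, given $g\in Z^{\star}(\mathfrak{g})$ and $h\in Z^{\star}(\mathfrak{h})$, I would expand bilinearly
\[
(g,h)*_i(g',h')=(g,0)*_i(g',0)+(g,0)*_i(0,h')+(0,h)*_i(g',0)+(0,h)*_i(0,h').
\]
The two diagonal terms are the images of $g*_ig'$ and $h*_ih'$ under the natural maps $\mathfrak{g}\star\mathfrak{g}\to(\mathfrak{g}\times\mathfrak{h})\star(\mathfrak{g}\times\mathfrak{h})$ and $\mathfrak{h}\star\mathfrak{h}\to(\mathfrak{g}\times\mathfrak{h})\star(\mathfrak{g}\times\mathfrak{h})$ induced by the inclusions, and so they vanish. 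The heart of the argument, and the only place where the hypothesis enters, is to show that the two cross terms vanish as well; this is the main obstacle, since the assertion clearly fails for arbitrary $\mathfrak{g}$.

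To kill the cross terms, I would exploit $\mathfrak{g}=[\mathfrak{g},\mathfrak{g}]$. For $(g,0)*_i(0,h')$, write $g=\sum[a_j,b_j]$, so $(g,0)=\sum[(a_j,0),(b_j,0)]$, and apply the bracket-in-first-slot identities (3b) and (3c) of Definition \ref{D:tensor} to rewrite each summand as a combination of expressions of the form $(\cdot,0)*_i\bigl((0,h')^{(\cdot,0)}\bigr)$ or $(\cdot,0)*_i\bigl({}^{(\cdot,0)}(0,h')\bigr)$; since the bracket in $\mathfrak{g}\times\mathfrak{h}$ is component-wise, every such action is zero, so each summand vanishes. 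Symmetrically, for $(0,h)*_i(g',0)$ use perfectness to write $g'=\sum[a'_j,b'_j]$ and apply the bracket-in-second-slot identities (3a) and (3d), obtaining zero again. Combining these observations yields $(g,h)*_i(g',h')=0$ for all $(g',h')$ and $i=1,2$, which completes the forward containment and establishes the proposition.
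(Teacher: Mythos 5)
Your proof is correct, and its overall strategy is the same as the paper's: the projections give the easy inclusion, and the hard inclusion comes down to a bilinear splitting in which the only nontrivial point is killing the ``mixed'' $\mathfrak{g}$-against-$\mathfrak{h}$ terms via perfectness. The difference is one of execution. The paper argues at the level of whole tensor-product objects: it reduces to showing that the image of $\bigl(Z^{\star}(\mathfrak{g})\times Z^{\star}(\mathfrak{h})\bigr)\star(\mathfrak{g}\times\mathfrak{h})$ is trivial, invokes the general fact that $\mathfrak{m}\star\mathfrak{n}=\mathfrak{m}^{\ab}\star\mathfrak{n}^{\ab}$ when the mutual actions are trivial, uses perfectness to replace $(\mathfrak{g}\times\mathfrak{h})^{\ab}$ by $0\times\mathfrak{h}^{\ab}$, and finally factors the surviving piece through $(\mathfrak{g}\times 0)\star(0\times\mathfrak{h})=0$. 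You instead work directly with generators, expanding $(g,h)*_i(g',h')$ into four terms and applying the defining relations (3a)--(3d) to the commutator expansions of $g$ and $g'$; the componentwise bracket makes every resulting action vanish. Your version is more self-contained (it needs only Definition \ref{D:tensor} and functoriality of $\star$, not the trivial-action and direct-sum identities for the tensor product), at the cost of being a hands-on computation; the paper's version is shorter once those structural facts are granted. Both are valid, and your identification of relations (3b)/(3c) for a bracket in the first slot and (3a)/(3d) for a bracket in the second slot is exactly right.
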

\begin{proof}  We have natural homomorphisms $(\mathfrak{g}\times \mathfrak{h})\star (\mathfrak{g}\times \mathfrak{h}) \to
\mathfrak{g}\star \mathfrak{g}$ and  $(\mathfrak{g}\times \mathfrak{h})\star (\mathfrak{g}\times \mathfrak{h}) \to
\mathfrak{h}\star \mathfrak{h}$ implying that $Z^{\star}(\mathfrak{g}\times \mathfrak{h}) \subseteq
Z^{\star}(\mathfrak{g})\times Z^{\star}(\mathfrak{h})$. To prove the converse implication
$Z^{\star}(\mathfrak{g})\times Z^{\star}(\mathfrak{h})\subseteq Z^{\star}(\mathfrak{g}\times \mathfrak{h})$, it suffices to show
that the image of the following natural homomorphism $\big ( Z^{\star}(\mathfrak{g})\times Z^{\star}(\mathfrak{h})\big) \star
\big ( \mathfrak{g}\times \mathfrak{h}\big) \to \big ( \mathfrak{g}\times \mathfrak{h}\big) \star \big ( \mathfrak{g}\times
 \mathfrak{h}\big)$ is trivial. Since the underlying actions are trivial, we have
\begin{align*}
\big ( Z^{\star}(\mathfrak{g})\times Z^{\star}(\mathfrak{h})\big) \star\big ( \mathfrak{g}\times \mathfrak{h}\big)& =
\big ( Z^{\star}(\mathfrak{g})\times Z^{\star}(\mathfrak{h})\big) \star\big ( \mathfrak{g}\times \mathfrak{h}\big)^{ab}
= \big ( Z^{\star}(\mathfrak{g})\times Z^{\star}(\mathfrak{h})\big) \star  \big( 0\times \mathfrak{h}^{ab}\big)\\
&  =
\big ( Z^{\star}(\mathfrak{g})\times 0\big) \star  \big( 0\times \mathfrak{h}^{ab}\big)  \oplus
\big ( 0\times Z^{\star}(\mathfrak{h})\big) \star  \big( 0\times \mathfrak{h}^{ab}\big) \\
&= \big ( Z^{\star}(\mathfrak{g})\times 0\big) \star  \big( 0\times \mathfrak{h}\big) .
\end{align*}
Thus, it suffices to show that the image of the following natural homomorphism
$$\big ( Z^{\star}(\mathfrak{g})\times 0\big) \star  \big( 0\times \mathfrak{h}\big) \to  \big ( \mathfrak{g}\times \mathfrak{h}\big)
\star \big ( \mathfrak{g}\times  \mathfrak{h}\big)$$ is trivial. We have the following commutative diagram
 \begin{equation*}
\xymatrix@+10pt{
&\big ( Z^{\star}(\mathfrak{g})\times 0\big) \star  \big( 0\times \mathfrak{h}\big) \ar@{->}[rd]^{}
\ar@{->}[rr]^{ }
& &\big ( \mathfrak{g}\times 0\big) \star  \big( 0\times \mathfrak{h}\big) \ar@{->}[ld]_{}
 \\
& &  \big ( \mathfrak{g}\times \mathfrak{h}\big) \star \big ( \mathfrak{g}\times  \mathfrak{h}\big) &
}
\end{equation*}
where all arrows are defined naturally. Moreover,
\[
\big ( \mathfrak{g}\times 0\big) \star  \big( 0\times \mathfrak{h}\big) =
\big ( \mathfrak{g}\times 0\big)^{ab} \star  \big( 0\times \mathfrak{h}\big)^{ab} = 0 .
\]
\end{proof}

\begin{remark} The analogues of Proposition \ref{P:5.1} and Proposition \ref{P:5.4} in the category of groups
were proved by G. Donadze. We are not giving reference here because the results are not published.
\end{remark}

Given a Lie algebra $L$, assume that $Z^{\otimes}(L)$ (resp. $Z^{\wedge}(L)$) denotes the tensor (resp. the exterior) center of
$L$ in sense of \cite{Ni2}. Then, in the same way one can prove the following proposition.

\begin{proposition}\label{P:5.5}
Let $L$ be a perfect Lie algebra. Then for any Lie algebra $H$ we have:
\[
Z^{\otimes}(L\times H) =Z^{\otimes}(L)\times Z^{\otimes}(H) \quad \text{and} \quad
Z^{\wedge}(L\times H) =Z^{\wedge}(L)\times Z^{\wedge}(H).
\]
\end{proposition}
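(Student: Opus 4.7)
The strategy is to mirror the proof of Proposition \ref{P:5.4} step-by-step in the Lie algebra setting, exploiting that in both cases the essential inputs are the same: (i) the tensor (or exterior) product sees only the abelianizations when one side acts trivially, and (ii) perfectness of one factor annihilates its contribution after abelianization. Since the two equalities in the statement are proved identically, I would carry out the argument for $Z^{\otimes}$ in detail and indicate at the end that the same words, with $\otimes$ replaced by $\wedge$, give the exterior version.

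For the ``$\subseteq$'' direction, I would use the natural Lie algebra homomorphisms
\[
(L\times H)\otimes (L\times H) \to L\otimes L \quad\text{and}\quad (L\times H)\otimes (L\times H)\to H\otimes H
\]
induced by the projections. These send any $z\otimes(x,y)$ lying in the image of $(Z^{\otimes}(L\times H))\otimes(L\times H)$ to zero in each factor, forcing the components of any $z\in Z^{\otimes}(L\times H)$ to lie in $Z^{\otimes}(L)$ and $Z^{\otimes}(H)$ respectively. Hence $Z^{\otimes}(L\times H)\subseteq Z^{\otimes}(L)\times Z^{\otimes}(H)$.

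For the reverse inclusion, it suffices to show that the natural map
\[
\bigl(Z^{\otimes}(L)\times Z^{\otimes}(H)\bigr)\otimes(L\times H)\to (L\times H)\otimes(L\times H)
\]
is zero. Since $Z^{\otimes}(L)\times Z^{\otimes}(H)\subseteq Z(L\times H)$, the mutual actions of the two sides are trivial, so by the analogue of \cite[Proposition 4.2]{Gn99} for Lie algebras (Ellis \cite{Ell91}) one can replace the second factor by its abelianization. Perfectness of $L$ gives $L^{ab}=0$, whence $(L\times H)^{ab}=0\times H^{ab}$. Expanding the tensor product over the direct sum yields
\[
\bigl(Z^{\otimes}(L)\times Z^{\otimes}(H)\bigr)\otimes\bigl(0\times H\bigr) = \bigl(Z^{\otimes}(L)\times 0\bigr)\otimes\bigl(0\times H\bigr) \;\oplus\; \bigl(0\times Z^{\otimes}(H)\bigr)\otimes\bigl(0\times H\bigr),
\]
and the second summand is already contained in the image of $H\otimes H\hookrightarrow (L\times H)\otimes(L\times H)$ coming from the trivial-action part, hence dies. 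For the first summand, I would use the commutative triangle
\[
\xymatrix@+10pt{
 \bigl(Z^{\otimes}(L)\times 0\bigr)\otimes\bigl(0\times H\bigr)\ar[rr]\ar[rd] & & \bigl(L\times 0\bigr)\otimes\bigl(0\times H\bigr)\ar[ld] \\
 & (L\times H)\otimes(L\times H) &
}
\]
together with the fact that the right-hand corner factors through $(L\times 0)^{ab}\otimes_{\mathbb{K}}(0\times H)^{ab}=L^{ab}\otimes_{\mathbb{K}}H^{ab}=0$.

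The main obstacle, and the only point where care is needed, is citing the correct Lie-algebra analogue of \cite[Proposition 4.2]{Gn99}: one must know that for Lie algebras $M,N$ acting trivially on each other both $M\otimes N$ and $M\wedge N$ are canonically isomorphic to $M^{ab}\otimes_{\mathbb{K}}N^{ab}$. Granting this (which is standard from Ellis's original papers), the proof for $Z^{\wedge}$ proceeds verbatim, replacing every $\otimes$ of non-abelian tensor products by $\wedge$, and completes the proposition.
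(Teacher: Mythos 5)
Your proposal is correct and matches the paper's intended argument: the paper gives no separate proof of Proposition \ref{P:5.5}, stating only that it is proved ``in the same way'' as Proposition \ref{P:5.4}, and your write-up is precisely that adaptation (projections for one inclusion; trivial actions, abelianization, perfectness of $L$, and the splitting of the product for the other). The only point worth tightening is the parenthetical claim that $M\wedge N\cong M^{\ab}\otimes_{\mathbb{K}}N^{\ab}$ for trivial actions, which as stated fails for $M=N$ abelian (one gets the ordinary exterior square), but in the two places you invoke it the factors are ideals with trivial intersection or only the vanishing of generators $z\wedge h$ is needed, so the argument is unaffected.
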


As a consequence we get the following result.

\begin{corollary}\label{C:last}
Let $L$ be a perfect Lie algebra. Then

(i) $L\times \mathbb{K}$ is not capable in the category of Lie algebras;

(ii)  $L\times \mathbb{K}$ is capable in the category of Leibniz algebras if and only if $Z(L)=0$.

\end{corollary}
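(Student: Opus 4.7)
The proof plan is to handle the two parts in parallel, using the exterior/tensor center characterization of capability together with the product formulas of Propositions \ref{P:5.4} and \ref{P:5.5}. The crucial point that will separate the two categories is the computation of the ``self-center'' of $\mathbb{K}$: in the Lie exterior square one has $\mathbb{K}\wedge \mathbb{K}=0$ (since $e\wedge e=0$ for the unique basis vector $e$), whereas in the Leibniz tensor square $\mathbb{K}\star \mathbb{K}\cong \mathbb{K}\oplus \mathbb{K}$, generated by $e\star_1 e$ and $e\star_2 e$, both nonzero (via the isomorphism $\mathfrak{g}^{ab}\star \mathfrak{g}^{ab}\cong \mathfrak{g}^{ab}\otimes \mathfrak{g}^{ab}\oplus \mathfrak{g}^{ab}\otimes \mathfrak{g}^{ab}$ recalled in the proof of Theorem \ref{T:t_e_centers}).

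For (i), I would invoke Proposition \ref{P:5.5} with $H=\mathbb{K}$ to write
\[
Z^{\wedge}(L\times \mathbb{K})=Z^{\wedge}(L)\times Z^{\wedge}(\mathbb{K}).
\]
Since $\mathbb{K}\wedge \mathbb{K}=0$ forces $Z^{\wedge}(\mathbb{K})=\mathbb{K}$, the right-hand side contains the nonzero subspace $0\times \mathbb{K}$, so the Lie exterior center of $L\times \mathbb{K}$ is nontrivial. The Lie-algebra analogue of Theorem \ref{T:main1} (the result of \cite{Ni1, Ni2}: a Lie algebra is capable in $\Lie$ if and only if its exterior center vanishes) then immediately rules out capability of $L\times \mathbb{K}$ in $\Lie$.

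For (ii), I would combine Corollary \ref{C:main} (tensor center equals exterior center) with Proposition \ref{P:5.4} to obtain
\[
Z^{\curlywedge}(L\times \mathbb{K})=Z^{\star}(L\times \mathbb{K})=Z^{\star}(L)\times Z^{\star}(\mathbb{K}).
\]
Since $L$ is perfect, Proposition \ref{P:5.1} gives $Z^{\star}(L)=Z(L)$, while the observation above about $\mathbb{K}\star \mathbb{K}$ forces $Z^{\star}(\mathbb{K})=0$ (the element $e$ does not lie in $Z^{\star}(\mathbb{K})$ because $e\star_1 e\neq 0$). Hence $Z^{\curlywedge}(L\times \mathbb{K})\cong Z(L)$, and Theorem \ref{T:main1} concludes that $L\times \mathbb{K}$ is capable in $\Lb$ if and only if $Z(L)=0$.

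The only genuine obstacle is verifying the two $\mathbb{K}$-center computations; everything else is a formal consequence of the results already established. The contrast between (i) and (ii) hinges entirely on the antisymmetry of the Lie bracket (making $e\wedge e=0$) versus the absence of antisymmetry for the Leibniz bracket (leaving both $e\star_1 e$ and $e\star_2 e$ nonzero); once these two facts are in hand, the rest of the argument is bookkeeping.
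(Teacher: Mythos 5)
Your proposal is correct and follows exactly the route the paper intends: Corollary \ref{C:last} is stated as an immediate consequence of Propositions \ref{P:5.1}, \ref{P:5.4} and \ref{P:5.5} together with the capability criteria (Theorem \ref{T:main1} and its Lie analogue from \cite{Ni1,Ni2}), and your two computations $Z^{\wedge}(\mathbb{K})=\mathbb{K}$ versus $Z^{\star}(\mathbb{K})=0$ are precisely the point the paper alludes to when it calls the one-dimensional Lie algebra ``the simplest example.'' Your only informality is citing Proposition \ref{P:5.1} for the equality $Z^{\star}(L)=Z(L)$, which is really established in its proof rather than its statement, but that is a matter of attribution, not a gap.
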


\section*{Acknowledgements}
\noindent We thank Guram Donadze for usefull disscusions. Emzar Khmaladze is very grateful to the University of Santiago de Compostela for hospitality. He with Manuel Ladra was supported by Ministerio de Econom\'ia y Competitividad (Spain) (European FEDER support
included), grant MTM2016-79661-P.


\begin{thebibliography}{10}


	\providecommand{\url}[1]{{#1}}
	\providecommand{\urlprefix}{URL }
	\expandafter\ifx\csname urlstyle\endcsname\relax
	\providecommand{\doi}[1]{DOI~\discretionary{}{}{}#1}\else
	\providecommand{\doi}{DOI~\discretionary{}{}{}\begingroup
		\urlstyle{rm}\Url}\fi

         \bibitem{AnDoSiTh}
         Antony, R. E., Donadze, G., Sivaprasad, V. P., Thomas, V. Z.: The second stable homotopy group of the
            Eilenberg Maclane space.
             \newblock Math. Z.  \textbf{287} (3-4) 1327--1342  (2017).



       \bibitem{BlFuMo}
        Blyth, R., Fumagalli, F., Morigi, M.: Some structural results on the non-abelian tensor square of groups.
        \newblock J. Group Theory \textbf{13},  83--94  (2010).
	
	
	\bibitem{Blo}
	Bloh, A.: A generalization of the concept of {L}ie algebra.
	\newblock Sov. Math. Dokl. \textbf{6}, 1450--1452 (1965).
	
	
	\bibitem{CaKh}
	Casas, J. M., Khmaladze, E.:  On Lie-central extensions of Leibniz algebras.
	\newblock Revista Real Academia de Ciencias Exactas, Físicas y Naturales, Serie A, Mátematicas \textbf{111}(1), 39--56, (2017).
	
\bibitem{CaKhLa}
	Casas, J. M., Khmaladze, E., Ladra, M.: Low-dimensional non-abelian Leibniz cohomology.
	\newblock Forum Mathematicum \textbf{25}(3), 443--469, (2013).
	

		
	\bibitem{DoGaKh}
	Donadze, G., Garc\'ia-Mart\'inez, X.,  Khmaladze, E.: A non-abelian exterior product and homology
of Leibniz algebras. 	
	\newblock Revista Matem\'atica Complutense \textbf{31}, 217--236, (2018).
	

   % \bibitem{DIKL12}
	%Donadze, G., Inassaridze, N., Khmaladze, E., Ladra, M.: Cyclic homologies of
	%crossed modules of algebras.
	%\newblock J. Noncommut. Geom. \textbf{6}(4), 749--771 (2012)

		
	\bibitem{Ell91}
	Ellis, G. J.: A non-abelian tensor product of {L}ie algebras.
	\newblock Glasgow Math. J. \textbf{33}(1), 101--120 (1991).
	
\bibitem{Ell95}
Ellis, G. J.: Tensor products and q-crossed modules.
\newblock J. London Math. Soc. \textbf{2}, 241--258  (1995).

\bibitem{Ell98} Ellis, G. J.: On the capability of groups.
	\newblock Proc. Edinburgh Math. Soc. \textbf{41}, 487--495 (1998).

	
	\bibitem{Gn99}
	Gnedbaye, A. V.: A non-abelian tensor product of {L}eibniz algebras.
	\newblock Ann. Inst. Fourier (Grenoble) \textbf{49}(4), 1149--1177 (1999).
	
		\bibitem{KuPi}
	Kurdiani, R., Pirashvili, T.: A {L}eibniz algebra structure on the second
	tensor power.
	\newblock J. Lie Theory \textbf{12}(2), 583--596 (2002).
	
	\bibitem{Lo}
	Loday, J. L.: Une version non commutative des alg\`ebres de {L}ie: les
	alg\`ebres de {L}eibniz.
	\newblock Enseign. Math. (2) \textbf{39}(3-4), 269--293 (1993).
	
	\bibitem{Lo0}
	Loday, J. L.: Cyclic homology, \emph{Grundlehren der Mathematischen
		Wissenschaften [Fundamental Principles of Mathematical Sciences]}, vol. 301,
	second edn.
	\newblock Springer-Verlag, Berlin (1998).
	\newblock Appendix E by Mar{\'{\i}}a O. Ronco, Chapter 13 by the author in
	collaboration with Teimuraz Pirashvili.
	
	\bibitem{LoPi}
	Loday, J. L., Pirashvili, T.: Universal enveloping algebras of {L}eibniz
	algebras and (co)homology.
	\newblock Math. Ann. \textbf{296}(1), 139--158 (1993).
	
	\bibitem{LoPi2}
	Loday, J. L., Pirashvili, T.: The tensor category of linear maps and {L}eibniz
	algebras.
	\newblock Georgian Math. J. \textbf{5}(3), 263--276 (1998).
	

\bibitem{Ni1}
	Niroomand, P.: On the tensor square of non-abelian nilpotent
finite-dimensional Lie algebras.
	\newblock Linear and Multilinear Algebra \textbf{59}(8), 831–836 (2011).	

	\bibitem{Ni2}
	Niroomand, P.: Some properties on the tensor square of Lie algebras.
	\newblock J. Algebra and its Applications \textbf{11}(5),  1250085 (2012).

\bibitem{SiTy}
	Simson, D., Tyc, A.: Connected sequences of stable derived functors and their
	applications.
	\newblock Dissertationes Math. (Rozprawy Mat.) \textbf{111}, 67 (1974).
	
	%\bibitem{Se}
	%Serre, J.P.: Lie algebras and {L}ie groups, \emph{Lecture Notes in
%		Mathematics}, vol. 1500, second edn.
%	\newblock Springer-Verlag, Berlin (1992).
%	\newblock 1964 lectures given at Harvard University

	
\end{thebibliography}
\end{document}